\begin{document}
\theoremstyle{plain}
\newtheorem{Thm}{Theorem}
\newtheorem{Cor}{Corollary}
\newtheorem{Ex}{Example}
\newtheorem{Con}{Conjecture}
\newtheorem{Main}{Main Theorem}
\newtheorem{Lem}{Lemma}
\newtheorem{Prop}{Proposition}

\theoremstyle{definition}
\newtheorem{Def}{Definition}
\newtheorem{Note}{Note}

\theoremstyle{remark}
\newtheorem{notation}{Notation}
\renewcommand{\thenotation}{}
\setcounter{tocdepth}{1}
\errorcontextlines=0
\numberwithin{equation}{section}
\renewcommand{\rm}{\normalshape}%

\title[Neutral 4-manifolds with null boundary]%
   {The causal topology of neutral 4-manifolds \\with null boundary}

\author{Nikos Georgiou}
\address{Nikos Georgiou\\
          Department of Mathematics\\
          Waterford Institute of Technology\\
          Waterford\\
          Co. Waterford\\
          Ireland.}
\email{ngeorgiou@wit.ie}
\author{Brendan Guilfoyle}
\address{Brendan Guilfoyle\\
          School of Science, Technology, Engineering and Mathematics,\\
          Institute of Technology, Tralee\\
          Clash\\
          Tralee\\
          Co. Kerry\\
          Ireland.}
\email{brendan.guilfoyle@ittralee.ie}

\keywords{Neutral metric, null boundary, hyperbolic 3-space, 3-sphere, spaces of constant curvature, geodesic spaces, contact}
\subjclass{Primary: 53A35; Secondary: 57N13}

\begin{abstract}
This paper considers aspects of 4-manifold topology from the point of view of the null cone of a neutral metric, a point of view we call neutral 
causal topology. In particular, we construct and investigate neutral 4-manifolds with null boundaries that arise from canonical 3- and 4-dimensional 
settings. 

A null hypersurface is foliated by its normal and, in the neutral case, inherits a pair of totally null planes at each point. This paper focuses on these
plane bundles in a number of classical settings
 
The first construction is the conformal compactification of flat neutral 4-space into the 4-ball. The null foliation on the boundary in this case is the 
Hopf fibration on the 3-sphere and the totally null planes in the boundary are integrable. The metric on the 4-ball is a conformally flat, scalar-flat, 
positive Ricci curvature neutral metric. 

The second constructions are subsets of the 4-dimensional space of oriented geodesics in a 3-dimensional space-form, equipped with its canonical
neutral metric. We consider all oriented geodesics tangent to a given embedded strictly convex 2-sphere. Both totally null planes on this null 
hypersurface are contact, and we characterize the curves in the null boundary that are Legendrian with respect to either totally null plane bundles. 
The Reeb vector 
field associated with the alpha-planes are shown to be the oriented normal lines to geodesics in the surface.

The third is a neutral geometric model for the intersection of two surfaces in a 4-manifold.
The surfaces are the sets of oriented normal lines to two round spheres in Euclidean 3-space, which form Lagrangian surfaces in the 4-dimensional 
space of all oriented lines. The intersection of the boundaries of their normal neighbourhoods 
form tori that we prove are totally real and Lorentz if the spheres do not intersect.  

We conclude with possible topological applications of the three constructions, including neutral Kirby calculus, neutral knot invariants
and neutral Casson handles, respectively. 

\end{abstract}

\date{23rd October 2016}
\thanks{Expository video clips explaining the results and motivations of this paper can be found at the following link: \url{https://www.youtube.com/watch?v=VUlPMPwT-hA}}
\maketitle
\newpage
\tableofcontents

\section{Introduction}

This paper considers certain 4-manifolds with boundary which carry a neutral metric (pseudo-Riemannian of signature (2,2))
with respect to which the boundary is a null hypersurface. We seek to extract geometric and topological information from the null cone of 
such metrics in a number of canonical situations. 

The results can be viewed as the first steps in the development of a neutral causal topology for 4-manifolds with boundary. From this point of view,
section \ref{s2} presents the 0-handle of a neutral Kirby calculus, with preferred curves along which to do surgery. The neutral metric appears to be 
ideally suited to 2-handle constructions in which the framing is tracked by the null cone on the associated tori.

Section \ref{s3} develops the theory of knots in tangent hypersurfaces in order to identify neutral knot invariants in null boundaries, while Section
\ref{s4} constructs a local geometric model for the normal neighbourhood of a transverse double point of a Lagrangian disc.

In more detail, we consider the conformal compactification of an open neutral 4-manifold.
Conformal compactifications of both Riemannian and Lorentzian 4-manifolds have been 
long studied \cite{and01} \cite{PaR}. For neutral 4-manifolds even the flat case has not received much attention. 
In the next section we seek to remedy this by providing the canonical example: 

\vspace{0.1in}
\begin{Thm}\label{t1}
There exists a smooth embedding $f:({\mathbb R}^{2,2},{\mathbb G})\rightarrow(B^4,\tilde{{\mathbb G}})$ and a function 
$\Omega:B^4\rightarrow{\mathbb R}$: such that
\begin{itemize}
\item[(i)] $f$ is a conformal diffeomorphism onto the interior of $B^4$ with  $f^*{\tilde {\mathbb G}}=\Omega^2{\mathbb G}$, 
\item[(ii)] $\Omega=0$ on $\partial B^4=S^3$,
\item[(iii)] the boundary is null,
\item[(iv)] $d\Omega=0$ on the boundary $S^3$ precisely on an embedded Hopf link.
\end{itemize}
\end{Thm}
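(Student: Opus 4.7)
The plan is to write $f$ and $\Omega$ down explicitly in adapted coordinates and verify the four claims by direct computation. The natural framework is to identify $\mathbb{R}^{2,2}\cong\mathbb{C}^2$ with complex coordinates $(z_1,z_2)$ and metric $\mathbb{G}=|dz_1|^2-|dz_2|^2$, and to identify $B^4$ with the unit ball $\{|w_1|^2+|w_2|^2\le 1\}\subset\mathbb{C}^2$, on whose boundary $S^3$ the Hopf $S^1$-action $(w_1,w_2)\mapsto(e^{i\theta}w_1,e^{i\theta}w_2)$ acts. Since both $\mathbb{G}$ and the boundary Hopf structure are invariant under the diagonal $U(1)$-action on $(z_1,z_2)$ (respectively $(w_1,w_2)$), I would take $f$ equivariantly of the form $f(z)=z\,\mu(|z_1|^2,|z_2|^2)$ and $\Omega$ a function of $|w_1|^2$ and $|w_2|^2$ alone, reducing the problem to one in two real variables.

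For a first ansatz, take $f(z_1,z_2)=(z_1,z_2)/\sqrt{1+|z_1|^2+|z_2|^2}$, a diffeomorphism onto $\mathrm{int}(B^4)$, and $\Omega(w)=(1-|w|^2)\,h(|w_1|^2,|w_2|^2)$ with $h$ smooth, strictly positive on $\mathrm{int}(B^4)$, and vanishing on $S^3$ precisely on the Hopf link $\{w_1=0\}\cup\{w_2=0\}$. A natural concrete choice is $h=1-(|w_1|^2-|w_2|^2)^2$, which satisfies $||w_1|^2-|w_2|^2|\le|w|^2<1$ in the interior (so $h>0$ there) and vanishes on $S^3$ exactly on the Hopf link. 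With this ansatz, (ii) is immediate, and the calculation $d\Omega|_{S^3}=-h\,d|w|^2|_{S^3}$ gives (iv). The target metric is then defined by $\tilde{\mathbb{G}}:=(f^{-1})^*(\Omega^2\mathbb{G})$ on $\mathrm{int}(B^4)$, so that $f^*\tilde{\mathbb{G}}=\Omega^2\mathbb{G}$ holds by construction.

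The substantive parts (i) and (iii) reduce to showing that $\tilde{\mathbb{G}}$ extends smoothly to all of $B^4$ and has a one-dimensional kernel at each point of $S^3$ spanned by the Hopf vector field $V=i(w_1\partial_{w_1}+w_2\partial_{w_2})-i(\bar w_1\partial_{\bar w_1}+\bar w_2\partial_{\bar w_2})$. In boundary-adapted coordinates $(\rho,\theta_1,\theta_2,\sigma)$ with $\rho=1-|w|^2$, a direct chain-rule expansion using $f^{-1}(w)=w/\sqrt{1-|w|^2}$ shows that $(f^{-1})^*\mathbb{G}$ contains poles of orders $\rho^{-1}$, $\rho^{-2}$, and $\rho^{-3}$, which must cancel exactly against the factors of $\Omega^2$. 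This cancellation is the main technical content and the main obstacle of the proof: the naive ansatz above is likely to require adjustment in either $\mu$ or $h$ to kill the highest-order pole (the one with coefficient proportional to $|w_1|^2-|w_2|^2$), and the correct choice is dictated precisely by this regularity demand. Once the smooth extension is established, the same calculation shows that the induced form $\tilde{\mathbb{G}}|_{S^3}$ has signature $(+,-,0)$ with the null direction along $V$, yielding (iii) and identifying the boundary null foliation as the Hopf fibration as advertised in the introduction.
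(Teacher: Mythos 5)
Your parts (ii) and (iv) are the easy parts and your ansatz does handle them; the entire content of the theorem is (i) and (iii) — that $\tilde{\mathbb G}$ extends smoothly and nondegenerately to the closed ball with null boundary — and this is precisely the step you defer. For your ansatz it does not merely ``require adjustment'': it fails, and no choice of $h$ can repair it. Write $s=|w_1|^2+|w_2|^2$, $\tau=|w_1|^2-|w_2|^2$, $\lambda=(1-s)^{-1/2}$, so that $f^{-1}(w)=\lambda w$. A direct computation gives
\[
(f^{-1})^*{\mathbb G}=\lambda^2\left(|dw_1|^2-|dw_2|^2\right)+{\textstyle\frac{1}{2}}\lambda^4\,d\tau\,ds+{\textstyle\frac{1}{4}}\lambda^6\,\tau\,ds^2 .
\]
With your $\Omega^2=(1-s)^2(1-\tau^2)^2=\lambda^{-4}(1-\tau^2)^2$, the last term becomes $\frac{(1-\tau^2)^2\tau}{4(1-s)}\,ds^2$, which blows up at every boundary point with $\tau\neq 0,\pm1$: the pushed-forward metric does not extend at all. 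Moreover, no scalar conformal factor fixes this: in the coframe $(ds,d\tau,d\theta_1,d\theta_2)$ every entry of $(f^{-1})^*{\mathbb G}$ involving the angles carries exactly the factor $\lambda^2$ (the angular block is diagonal, $\lambda^2\frac{s+\tau}{2}\,d\theta_1^2-\lambda^2\frac{s-\tau}{2}\,d\theta_2^2$), while the $ds^2$ entry contains $\frac{1}{4}\lambda^6\tau$. If $\Omega^2\lambda^2\to 0$ at a boundary point, the limiting Gram matrix has two vanishing rows and $\tilde{\mathbb G}$ is degenerate there, so $(B^4,\tilde{\mathbb G})$ is not a neutral manifold and ``null boundary'' is meaningless; if $\Omega^2\lambda^2$ stays bounded away from zero, then $\Omega^2\lambda^6|\tau|=(\Omega^2\lambda^2)\,\lambda^4|\tau|\to\infty$ wherever $\tau\neq0$. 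Either way (i) or (iii) fails.

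The failure is structural, not a matter of tuning, and it afflicts your whole equivariant family: any $f(z)=z\,\mu(|z_1|^2,|z_2|^2)$ preserves $|w_1|/|w_2|=|z_1|/|z_2|$, so every null line of ${\mathbb R}^{2,2}$ with direction angles $(\theta_1,\theta_2)$ — a three-parameter family of parallel null geodesics — accumulates at the single Clifford-torus point with those angles. You are crushing null infinity, which must be the open dense three-dimensional stratum of a null conformal boundary, onto a $2$-torus; such a collapse of infinitely many null geodesics onto one boundary point is only consistent at points where $d\Omega=0$ (as at $i^0,i^\pm$ in Penrose's Lorentzian picture), which contradicts your own (iv), since your $\Omega$ has $d\Omega\neq 0$ on the Clifford torus. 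The paper's construction avoids this precisely by \emph{not} treating the two radii symmetrically: in double polar coordinates $x^1+ix^2=R_1e^{i\theta_1}$, $x^3+ix^4=R_2e^{i\theta_2}$ it compactifies the two null combinations separately, $\tan p=R_1+R_2$, $\tan q=R_1-R_2$, with $\Omega=2\cos p\cos q$, so that $\Omega^2{\mathbb G}$ is, up to an overall constant, the metric (\ref{e:confmet}), manifestly smooth and nondegenerate up to $p=\pi/2$; the induced boundary metric $\frac{1}{4}\cos^2q\,(d\theta_1^2-d\theta_2^2)$ is degenerate, and $d\Omega|_{p=\pi/2}=-2\cos q\,dp$ vanishes exactly at $q=\pm\pi/2$, the Hopf link. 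Note finally that even your anticipated answer to (iii) is off: in the correct compactification the degenerate (normal) direction of the boundary is $\partial_q$, pointing from one link component to the other, while the Hopf vector field $\partial_{\theta_1}+\partial_{\theta_2}$ is null but does not lie in the radical of the induced metric.
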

\vspace{0.1in}

The metric $\tilde{\mathbb G}$ on the 4-ball is a conformally flat, scalar-flat neutral metric with positive definite Ricci tensor, 
analogous to the Einstein static universe.  Thus, space-like infinity and timelike infinity are Hopf-linked in the 
boundary of a flat universe with two times.

The null boundary inherits a degenerate Lorentz metric, whose null cone is a pair of transverse totally null planes at each point 
($\alpha$-planes and $\beta$-planes).  In the conformal compactification of ${\mathbb R}^{2,2}$ these plane fields are both integrable, and contain the
tangents to the (1,1) and (1,-1) curves on the Hopf tori about the link. 

This 4-ball should be viewed as the 0-handle of a neutral Kirby calculus so that one can consider attaching handles along framed curves
in the boundary \cite{gompf} \cite{kirby}. In order to carry the neutral metric along certain causal conditions must be fulfilled and then one can
develop a neutral surgery on conformal classes of neutral metrics. In this case, the foliation by Lorentz tori tracks the framing for such surgery
along the Hopf link.

The second type of 4-manifold, detailed in Section \ref{s3}, are subsets of the space ${\mathbb L}({\mathbb M}^3)$ of oriented geodesics in a 
3-dimensional space-form $({\mathbb M}^3,g)$.
It is well-known that ${\mathbb L}({\mathbb M}^3)$ admits an invariant neutral metric ${\mathbb G}$  \cite{gag1} \cite{gk1} \cite{honda}
\cite{sal05} \cite{Salvai}. 

Given a smoothly embedded surface $S\subset{\mathbb M}^3$, define the {\it tangent hypersurface of S}, denoted 
${\mathcal{H}}(S)\subset {\mathbb L}({\mathbb M}^3)$, to be the set of oriented 
geodesics that are tangent to $S$. This 3-manifold is locally a circle bundle over $S$, with projection $\pi:{\mathcal{H}}(S)\rightarrow S$ and fibre 
generated by rotation about the normal to $S$. 

In this paper we investigate the geometric properties of ${\mathcal H}(S)$ induced by the neutral metric on ${\mathbb L}({\mathbb M}^3)$.
If $S\subset{\mathbb M}^3$ is a smooth surface, then  ${\mathcal H}(S)$ is an immersed hypersurface which is null with respect to 
${\mathbb G}$. 

Thus, ${\mathcal H}(S)$ is foliated by null geodesics and contains an $\alpha$-plane and a $\beta$-plane at each point. A knot 
${\mathcal C}\subset{\mathcal H}(S)$, which is an oriented tangent line field over a curve $c\subset S$, is said to be {\it$\alpha$-Legendrian} 
({\it$\beta$-Legendrian}) if its tangent lies in the $\alpha$-planes ($\beta$-planes, respectively).

Given a contact structure on a 3-manifold with contact 1-form $\omega$, the {\it Reeb vector field} $X$ is characterised by
\[
d\omega(X,\cdot)=0 \qquad\qquad \omega(X)=1
\]

In the case where $S$ is a strictly convex 2-sphere, the tangent hypersurface bounds a disc bundle of
Euler number 2 in ${\mathbb L}({\mathbb M}^3)$, and we prove:

\vspace{0.1in}
\begin{Thm}\label{t2}
If $S\subset{\mathbb M}^3$ is a smooth convex 2-sphere, then the $\alpha$-planes and $\beta$-planes of the neutral metric are both contact.

Moreover, a knot ${\mathcal C}\subset{\mathcal H}(S)$, with contact curve $c=\pi({\mathcal C})\subset S$, is $\alpha$-Legendrian iff  
$\forall \gamma\in{\mathcal C}$, $\gamma$ is tangent to $c\subset S\subset {\mathbb M}^3$.

In addition, any two of the following imply the third:
\begin{itemize}
\item[(i)] ${\mathcal C}$ is $\beta$-Legendrian, 
\item[(ii)] $\forall \gamma\in{\mathcal C}$, $\gamma$ is normal to $c$,
\item[(iii)] either $c$ is a line of curvature of $S$, or $S$ is umbilic along $c$.
\end{itemize}
Finally, the Reeb vector field of the $\alpha-$planes consists of the oriented lines normal to a geodesic of $S$.

\end{Thm}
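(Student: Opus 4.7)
The plan is to work in a local frame on $\mathcal{H}(S)$ adapted to the geometry of $S$. At a point $\gamma\in\mathcal{H}(S)$, corresponding to a unit tangent $v$ at $p\in S$, I would choose the Darboux frame $(e_1,e_2,N)$ on $S$ with $e_1=v$, and lift it to a frame on $\mathcal{H}(S)$ consisting of: (a) the ``translate $p$ along $e_1$ while parallel-transporting $v$'' direction, which generates the null geodesic foliation of $\mathcal{H}(S)$; (b) the ``translate $p$ along $e_2$ while rotating $v$ to stay tangent to $S$'' direction; and (c) the ``rotate $v$ in $T_pS$ with $p$ fixed'' direction. Using the explicit coordinate expression for $\mathbb{G}$ on $\mathbb{L}(\mathbb{M}^3)$ developed in the previous subsection, the induced degenerate metric on $\mathcal{H}(S)$ can be computed in this frame and the two totally null plane fields identified: both contain (a), while the $\alpha$-plane is distinguished as the span with (c) (pure rotation) and the $\beta$-plane as the span with (b) modified by a curvature-dependent multiple of (c).

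For the contact statement, let $\omega_\alpha,\omega_\beta$ be 1-forms annihilating these distributions. Computing $\omega\wedge d\omega$ in the Darboux frame reduces via the Gauss--Weingarten equations of $S\subset\mathbb{M}^3$ to an expression whose non-vanishing is controlled by the Gaussian curvature (and, in the non-flat space-forms, a curvature correction of fixed sign). Strict convexity forces this expression to be nowhere zero, so both plane bundles are contact. For the $\alpha$-Legendrian characterization, a curve $\mathcal{C}\subset\mathcal{H}(S)$ has tangent in the $\alpha$-plane iff, modulo the null geodesic direction (a), the component coming from the projection to $S$ vanishes except along $e_1$; this says precisely that $\dot c$ is parallel to $v=\gamma\cap T_pS$, i.e.\ $\gamma$ is tangent to $c$.

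For the ``two imply three'' statement, I would compute the condition for $\mathcal{C}$ to be $\beta$-Legendrian in terms of the derivatives of $(p,v)$ along $c$. Writing $v = \cos\theta\, e_1^c + \sin\theta\, e_2^c$ in a Darboux frame along $c$, the $\beta$-condition becomes an algebraic relation between $\dot\theta$, the geodesic curvature of $c$, and the shape operator evaluated on $\dot c$. If one additionally imposes that $\gamma\perp c$ (so $v = N_c$, the in-surface normal to $c$), the relation reduces to $\mathrm{II}(\dot c, T_c) = 0$, where $T_c$ is the unit tangent, which is exactly the line-of-curvature equation for $c$ (with the umbilic locus as the degenerate case). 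The symmetric form of the relation gives the full ``any two imply the third'' statement.

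For the Reeb vector field of the $\alpha$-planes, the natural candidate is the vector field $X$ on $\mathcal{H}(S)$ whose flow sends $(p,v)$ to $(p(t),v(t))$, where $p(t)$ traces a geodesic of $S$ with initial velocity $\perp v$, and $v(t)$ is parallel-transported along $p(t)$ in $S$. The resulting oriented lines are then normal to this geodesic, as claimed. One must verify $\omega_\alpha(X)=1$ (a normalization that fixes $X$ uniquely along the distribution complementary to $\alpha$) and $d\omega_\alpha(X,\cdot)=0$, which by the Cartan formula reduces to showing $\mathcal{L}_X\omega_\alpha=0$; this in turn is equivalent to the vanishing of the geodesic curvature of $\pi(\mathcal{C})$, completing the identification. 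The main obstacle I expect is bookkeeping: correctly distinguishing $\alpha$ from $\beta$ and keeping track of signs when the frame $(e_1,e_2)$ on $S$ rotates, so I would fix orientation conventions once and for all from the explicit form of $\mathbb{G}$ before beginning the contraction $d\omega_\alpha(X,\cdot)=0$.
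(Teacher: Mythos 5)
Your frame setup contains a genuine error that undoes the second half of the proof. The degenerate direction of the induced metric on $\mathcal{H}(S)$ --- the generator of the null foliation --- is your direction (c), the rotation of $v$ in $T_pS$ with $p$ fixed, not your direction (a). In the paper's non-flat model this is the statement that $e_0=d\overline\phi(\partial/\partial\theta)=\phi\wedge v^{\bot}$ is null and orthogonal to $d\overline\phi(e_1)$, $d\overline\phi(e_2)$; in the flat model it is the degeneracy of the induced metric (\ref{e:indmet}) in the $A$-direction. Your direction (a) is something else: differentiating $\phi\wedge v$ along a curve in $S$ with velocity $v$ while parallel-transporting $v$ gives $\frac{d}{dt}(\phi\wedge v)=\mathrm{II}(v,v)\,\phi\wedge N$, so (a) is proportional to $\phi\wedge N$, which is null and lies in the $\alpha$-plane but is \emph{not} the kernel direction. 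Consequently your $\alpha$-plane $\mathrm{span}\{(a),(c)\}$ happens to be correct (it coincides with $\Pi_+=\mathrm{span}\{\phi\wedge v^{\bot},\phi\wedge N\}$), and your $\alpha$-Legendrian characterization survives; but the claim that \emph{both} null planes contain (a) is false. The actual $\beta$-plane is $\Pi_-=\mathrm{span}\{\phi\wedge v^{\bot},v\wedge v^{\bot}\}$: it contains the fibre direction (c) together with a curvature-weighted horizontal direction, and it contains (a) only where $\mathrm{II}(v,v)=k_1\cos^2\theta+k_2\sin^2\theta=0$, which never happens on a convex $S$.

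This misidentification is fatal for the three-way equivalence. With the correct $\Pi_-$, the $\beta$-Legendrian condition constrains only the projected curve: writing $v=\cos\theta\,e_1+\sin\theta\,e_2$ in the principal frame, $\dot c$ must be proportional to $k_2\sin\theta\,e_1-k_1\cos\theta\,e_2$ while the fibre rate $\dot\theta$ is completely unconstrained (since (c) itself lies in $\Pi_-$); intersecting with condition (ii), $\dot c\parallel v^{\bot}$, gives $(k_1-k_2)\sin\theta\cos\theta=0$, which is exactly (iii). With your plane the logic inverts: the projection of $\mathrm{span}\{(a),(b)+\lambda(c)\}$ to $TS$ is two-dimensional, so your ``$\beta$-condition'' leaves $\dot c$ free and instead fixes $\dot\theta$ in terms of $\dot c$, a condition from which (i), (ii) $\Rightarrow$ (iii) cannot be extracted --- you would be proving a statement about a different plane field. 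Relatedly, your final reduction is misstated even on its own terms: $\mathrm{II}(\dot c,T_c)=0$ with $\dot c\parallel T_c$ is the asymptotic-direction equation, whereas the line-of-curvature condition is $\mathrm{II}(T_c,N_c)=0$. Two smaller points: in the paper's non-flat computation the $\alpha$-contact property needs no convexity at all ($\eta^3\wedge d\eta^3=e^1\wedge e^2\wedge d\theta$), convexity entering only through $\eta^2\wedge d\eta^2=-k_1k_2\,e^1\wedge e^2\wedge d\theta$ for the $\beta$-planes, so ``controlled by Gaussian curvature'' is only half right; and your Reeb candidate (geodesic base flow with $v$ normal and parallel-transported) is indeed the correct flow, but a Reeb field belongs to a contact \emph{form}, not a distribution, so the verification $\omega_\alpha(X)=1$, $d\omega_\alpha(X,\cdot)=0$ can only be run once the planes, and hence the form, are fixed correctly.
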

\vspace{0.1in}
The proof requires separate formalisms in the flat and non-flat cases.

Section \ref{s4} contains a local geometric model of the normal neighbourhood of an isolated double point on an immersed surface, given by
the intersection of two Lagrangian surfaces in ${\mathbb L}({\mathbb R}^3)$. These surfaces are the oriented normal lines to two round spheres
in ${\mathbb R}^3$ and the boundaries of a normal neighbourhood of the surfaces can be identified with the tangent hypersurfaces of the spheres.

\vspace{0.1in}
\begin{Thm}\label{t3}
Let $S_1,S_2\subset{\mathbb R}^3$ be round spheres of radii $r_1\geq r_2$ with centres separated by a distance $l$ in ${\mathbb R}^3$.

Then,
\begin{itemize}
\item[(i)]  $\qquad{\mathcal H}(S_1)\cap{\mathcal H}(S_2)=\{\emptyset\}\qquad{\mbox{ iff }}\qquad l<r_1-r_2$, 
\item[(ii)] $\qquad{\mathcal H}(S_1)\cap{\mathcal H}(S_2)=S^1\qquad{\mbox{ iff }}\qquad l=r_1-r_2$,
\item[(iii)] $\qquad{\mathcal H}(S_1)\cap{\mathcal H}(S_2)=T^2\qquad{\mbox{ iff }}\qquad r_1-r_2<l\leq r_1+r_2$,
\item[(iv)] ${\mathcal H}(S_1)\cap{\mathcal H}(S_2)=T^2\coprod T^2\qquad{\mbox{ iff }}\qquad r_1+r_2<l$.
\end{itemize}
 
If $l>r_1+r_2$ so that $S_1\cap S_2=\{\emptyset\}$, then the intersection tori $T^2$ are totally real and Lorentz.
\end{Thm}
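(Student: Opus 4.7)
The plan is to reduce the intersection to an explicit system of two polynomial equations on $\mathbb{L}(\mathbb{R}^3)\cong TS^2$, classify the solution set by a single discriminant, and then pull back the neutral metric for the signature and totally real claims in case (iv). After an ambient isometry I place $c_1$ at the origin and $c_2=(l,0,0)$, and parameterise a line $\gamma$ by $(\vec u,\vec v)$ with $\vec u\in S^2$ its oriented direction and $\vec v\in\vec u^\perp$ the foot of the perpendicular from $c_1$. Writing $\tilde c_2 = c_2 - (c_2\cdot\vec u)\vec u$ for the projection of $c_2$ into $\vec u^\perp$, the tangency conditions $\mathrm{dist}(c_i,\gamma)=r_i$ become
\[
|\vec v|^2 = r_1^2, \qquad |\vec v-\tilde c_2|^2 = r_2^2.
\]
Subtracting yields a linear relation that forces $v_1 = (r_1^2-r_2^2+l^2(1-u_1^2))/(2l)$, where $u_1$ is the first component of $\vec u$. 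Reinserting $|\vec v|^2=r_1^2$ reduces solvability for the remaining components $(v_2,v_3)$ to the inequality
\[
\bigl(l^2(1-u_1^2)-(r_1-r_2)^2\bigr)\bigl(l^2(1-u_1^2)-(r_1+r_2)^2\bigr)\le 0,
\]
whose two critical values $(r_1\pm r_2)^2$ are exactly the thresholds appearing in (i)--(iv).

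Next I would read off the topology directly from this factorisation. The allowed locus on the direction sphere $S^2$ is empty in case (i); a single latitude circle in case (ii), which parametrises the oriented pencil of lines through the internal tangency point and so gives a circle; a single annular belt about the equator in case (iii); and two disjoint annular belts in case (iv). Over a generic point of an allowed belt, the remaining circle--line subsystem in $(v_2,v_3)$ has two roots, and these two roots coalesce to a single root precisely on the bounding latitudes where the discriminant vanishes. The intersection is therefore a branched double cover of each allowed belt, branched over the two bounding circles; each such cover is a torus, yielding $T^2$ in case (iii) and $T^2\coprod T^2$ in case (iv). Transversality of the two defining equations outside the branch locus establishes smoothness there, and the borderline case $l=r_1+r_2$ requires a separate check that the coalescing equatorial fibre still assembles into a single torus.

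Finally, for case (iv) I would pull back the neutral metric $\mathbb G$ on $\mathbb{L}(\mathbb{R}^3)$, together with its paracomplex ($J^2=+\mathrm{Id}$) structure, to an intersection torus, using $(u_1,\phi)$ on a belt (with $\phi$ the azimuthal angle about the $c_1c_2$-axis) and the two smooth branches $\vec v^\pm(u_1,\phi)$. Computing the induced $2\times 2$ metric, the strict inequality $l>r_1+r_2$ forces $v_1^2<r_1^2(1-u_1^2)$ throughout the interior of each belt, which keeps the induced metric non-degenerate; a direct signature computation then gives $(1,1)$, so the induced metric is Lorentz, while checking that the $\alpha$- and $\beta$-plane distributions each meet the tangent plane of the torus trivially yields the totally real property. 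The main obstacle is this last step: the topological cases (i)--(iv) reduce to a classical count of common tangents, but the Lorentz and totally real conclusions require direct verification in local charts using the explicit form of $\mathbb G$ and of its paracomplex structure, while the borderline cases $l=r_1\pm r_2$ also demand careful treatment of degenerate fibres.
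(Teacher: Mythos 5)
Your reduction of the intersection to the two tangency equations and the discriminant factorisation $(l^2(1-u_1^2)-(r_1-r_2)^2)(l^2(1-u_1^2)-(r_1+r_2)^2)\le 0$ is, up to a change of coordinates, exactly the paper's argument: the paper works in the holomorphic chart $(\xi,\eta)$ of ${\mathbb L}({\mathbb R}^3)\cong T{\mathbb S}^2$ and arrives at the equivalent condition $r_1-r_2\le l\sin\phi\le r_1+r_2$, then reads off cases (i)--(iv) from the belts on the direction sphere and the folding of the two roots, just as you do. (Both you and the paper gloss over the same borderline subtleties: at $l=r_1+r_2$ the two sheets actually cross transversally along the equatorial circle of lines through the tangency point, and when $r_1=r_2$ the ``belts'' in case (iv) are caps whose polar fibre is a circle; you at least flag the first of these.)

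The genuine gap is in your final step, and it is not merely a matter of carrying out a computation you have postponed: the mechanism you propose for non-degeneracy is the wrong one. You claim that $l>r_1+r_2$ ``forces $v_1^2<r_1^2(1-u_1^2)$ throughout the interior of each belt, which keeps the induced metric non-degenerate.'' But that inequality is just the strict discriminant condition, which holds on the interior of the allowed belt in case (iii) as well -- where the theorem is false: there the intersection torus contains a circle of complex points with degenerate induced metric. Moreover, the tori in case (iv) contain the fold circles over the belt boundaries, where your inequality fails with equality, yet the metric there is still Lorentz. The locus that actually controls degeneracy is $u_1=0$ (lines perpendicular to the axis of centres, equivalently tangent to the intersection circle $S_1\cap S_2$ when it exists): the paper computes the complex-point function $\sigma$ and the Lagrangian defect $\lambda$ of the intersection surface and finds ${\mbox{det }}{\mathbb G}|_{T^2}=\lambda^2-|\sigma|^2=-\tfrac{1}{4}l^2\cos^2\phi$, with $|\sigma|^2$ vanishing only where $\cos\phi=0$. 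What $l>r_1+r_2$ buys is precisely that the belts satisfy $u_1^2\ge 1-(r_1+r_2)^2/l^2>0$, i.e.\ the tori avoid the equator, so the determinant is strictly negative (Lorentz) and $\sigma$ never vanishes (totally real) on the whole torus, fold circles included. Your plan, as stated, ties both conclusions to the wrong inequality and would not distinguish case (iv) from case (iii), nor cover the fold circles; it needs to be replaced by (or would, upon honest computation, lead you to) the $\cos\phi\neq 0$ criterion above.
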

\vspace{0.1in}

In the final Section, we discuss these three constructions from a topological point of view.

\vspace{0.2in}

\section{Conformal Compactification}\label{s2}

\subsection{Neutral geometry}
Let us assemble some facts of neutral geometry that will be required in this paper. The statements are in ${\mathbb R}^4$, but hold 
in the tangent space at a point in any neutral 4-manifold.

Consider the flat neutral metric ${\mathbb G}$ on ${\mathbb R}^4$ in standard coordinates $(x^1,x^2,x^3,x^4)$:
\[
ds^2=(dx^1)^2+(dx^2)^2-(dx^3)^2-(dx^4)^2.
\]
Throughout, denote ${\mathbb R}^4$ endowed with this metric by ${\mathbb R}^{2,2}$.

\vspace{0.1in}
\begin{Def}
The neutral {\it null cone} is the set of null vectors in ${\mathbb R}^{2,2}$:
\[
{\mathcal K}=\{X\in{\mathbb R}^{2,2}\;|\; {\mathbb G}(X,X)=0\}.
\]
\end{Def}
\vspace{0.1in}

The null cone is a cone over a torus, in distinction to the Lorentz ${\mathbb R}^{3,1}$ case where the null cone is a cone over a 2-sphere. To
see the torus, note that the map $f:{\mathbb R}\times S^1\times S^1\rightarrow {\mathcal K}$ 
\[
f(a,\theta_1,\theta_2)=\left( a\cos\theta_1,a\sin\theta_1,a\cos\theta_2,a\sin\theta_2\right),
\]
parameterizes the null vectors as a cone over ${\mathbb T}^2$.

\begin{Def}
A plane $P\subset{\mathbb R}^{2,2}$ is {\it totally null} if every vector in $P$ is null with respect to ${\mathbb G}$, and the inner 
product of any two vectors in $P$ is zero. 
\end{Def}

Since every vector that lies in a totally null plane is null, we can picture a null plane as a cone over a circle in ${\mathcal K}$. A straight-forward
calculation shows that:

\begin{Prop}
A totally null plane is a cone over either a (1,1)-curve or a (1,-1)-curve on the torus, the former for an $\alpha$-plane, the latter for a
$\beta$-plane. 
\end{Prop}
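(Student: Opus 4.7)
The plan is to exploit the explicit parametrization $f:{\mathbb R}\times S^1\times S^1\to{\mathcal K}$ and reduce ``totally null'' to a trigonometric identity on torus coordinates. First, take two arbitrary null vectors $v=f(a,\theta_1,\theta_2)$ and $w=f(b,\phi_1,\phi_2)$; a direct expansion from the formula defining $f$ and the product-to-sum identity yields
\[
{\mathbb G}(v,w)=ab\bigl[\cos(\theta_1-\phi_1)-\cos(\theta_2-\phi_2)\bigr].
\]
So two non-parallel null vectors span a totally null plane iff $\theta_1-\phi_1=\pm(\theta_2-\phi_2)$.

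Now let $P\subset{\mathbb R}^{2,2}$ be a totally null 2-plane. Since $P\setminus\{0\}\subset{\mathcal K}$, projecting along the radial coordinate $a$ lands $P\setminus\{0\}$ in a 1-parameter subset $C\subset T^2$. Fix any $(\alpha_0,\beta_0)\in C$; by the first paragraph, every other point $(\alpha,\beta)\in C$ must satisfy $\alpha-\alpha_0=\pm(\beta-\beta_0)$. Continuity of $C$ forces the sign to be globally constant, so $C$ lies entirely on the (1,1)-curve or entirely on the (1,-1)-curve through $(\alpha_0,\beta_0)$, and $P$ is the cone on that curve.

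Conversely, each such cone is a genuine totally null plane: the (1,1)-cone through $(\alpha_0,\beta_0)$ is parametrized as $f(a,\alpha_0+\theta,\beta_0+\theta)=a\cos\theta\,e_1+a\sin\theta\,e_2$, where $e_1=f(1,\alpha_0,\beta_0)$ and $e_2$ is the $\theta$-derivative, so the cone is the ${\mathbb R}$-span of $e_1,e_2$; the trigonometric identity above shows $e_1,e_2$ are null and mutually orthogonal. The same works for (1,-1). The labelling of one family as $\alpha$-planes and the other as $\beta$-planes is a choice of orientation convention, tied to the self-dual/anti-self-dual splitting of $\Lambda^2$ in signature $(2,2)$; fixing the standard orientation on ${\mathbb R}^{2,2}$ assigns (1,1)-cones to $\alpha$ and (1,-1)-cones to $\beta$.

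The calculations here are routine. The one place to be careful is the passage from ``a 2-parameter family of null rays'' to ``a linear 2-plane'': one has to notice that permitting $a\in{\mathbb R}$ (not just $a\geq 0$) in the cone, together with the circle parameter $\theta$, produces an ${\mathbb R}$-linear combination of two fixed vectors and hence fills out a genuine linear subspace. This, and matching the $\alpha/\beta$ convention with the self-dual/anti-self-dual decomposition, are the only points requiring slight care; the rest is direct computation.
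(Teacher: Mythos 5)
Your approach is the right one: the paper offers no argument at all for this Proposition (it is introduced only with ``A straight-forward calculation shows that:''), so your computation is precisely the calculation being gestured at. The identity ${\mathbb G}\bigl(f(a,\theta_1,\theta_2),f(b,\phi_1,\phi_2)\bigr)=ab\bigl[\cos(\theta_1-\phi_1)-\cos(\theta_2-\phi_2)\bigr]$ is correct, as is the converse direction: writing the cone over the $(1,1)$-curve through $(\alpha_0,\beta_0)$ as $a\cos\theta\,e_1+a\sin\theta\,e_2$ exhibits it as the span of two orthogonal null vectors, hence a genuine totally null plane, and the same works for $(1,-1)$.

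The one step that does not hold as stated is ``Continuity of $C$ forces the sign to be globally constant.'' Connectedness of $C$ is not enough: the $(1,1)$- and $(1,-1)$-curves through $(\alpha_0,\beta_0)$ intersect each other (at $(\alpha_0,\beta_0)$ and again at $(\alpha_0+\pi,\beta_0+\pi)$), so a connected subset of their union can pass from one branch to the other through a crossing point; continuity alone cannot exclude such an ``X''-shaped image. The repair is already contained in your first paragraph: orthogonality must hold for \emph{every} pair of vectors in $P$, not only for pairs containing the base point. If $C$ contained a point on each branch, neither a crossing point, say $(\alpha_0+t,\beta_0+t)$ and $(\alpha_0+s,\beta_0-s)$ with $t,s\notin\{0,\pi\}$, then your identity applied to this pair forces $t-s=\pm(t+s)\pmod{2\pi}$, i.e. $s\in\{0,\pi\}$ or $t\in\{0,\pi\}$, a contradiction; hence $C$ lies entirely on one branch, and $P$, being a $2$-plane contained in the $2$-plane that is the cone on that branch, equals it. With that substitution the proof is complete; the $\alpha$/$\beta$ labelling is, as you say, a convention fixed by the ambient orientation, equivalently by ${\mathbb J}^{\pm}$-invariance, which is the content of the Proposition that follows this one in the paper.
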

\vspace{0.1in}

By rotating around the meridian we see that the set of totally null planes is $S^1\coprod S^1$.

The metric has two natural compatible complex structures (up to an overall sign), which in coordinates $(x^1,x^2,x^3,x^4)$ take the form
\[
{\mathbb J}^+=\left[\begin{matrix}
0 & 1&0&0 \\
-1 & 0&0&0\\
0 & 0&0&1\\
0 & 0&-1&0
\end{matrix}
\right]
\qquad\qquad
{\mathbb J}^-=\left[\begin{matrix}
0 & 1&0&0 \\
-1 & 0&0&0\\
0 & 0&0&-1\\
0 & 0&1&0
\end{matrix}
\right].
\] 

\vspace{0.1in}
\begin{Prop}\cite{gag2}
An $\alpha-$plane ($\beta-$plane) is invariant under the complex structure ${\mathbb J}^+$ ( ${\mathbb J}^-$), respectively
\end{Prop}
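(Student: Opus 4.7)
The plan is to combine the torus parameterization of $\mathcal{K}$ from the excerpt with a direct computation of the action of $\mathbb{J}^{\pm}$ on coordinates. Reading off the matrices above, the coordinate actions are
\[
\mathbb{J}^+(x^1,x^2,x^3,x^4) = (x^2, -x^1, x^4, -x^3),
\qquad
\mathbb{J}^-(x^1,x^2,x^3,x^4) = (x^2, -x^1, -x^4, x^3).
\]
Substituting the parameterization $f(a,\theta_1,\theta_2) = (a\cos\theta_1, a\sin\theta_1, a\cos\theta_2, a\sin\theta_2)$ of the null cone and using the identities $\sin\theta = \cos(\theta - \tfrac{\pi}{2})$ and $-\cos\theta = \sin(\theta - \tfrac{\pi}{2})$, I expect to obtain the identities
\[
\mathbb{J}^+ \circ f(a,\theta_1,\theta_2) = f\bigl(a,\, \theta_1 - \tfrac{\pi}{2},\, \theta_2 - \tfrac{\pi}{2}\bigr),
\]
\[
\mathbb{J}^- \circ f(a,\theta_1,\theta_2) = f\bigl(a,\, \theta_1 - \tfrac{\pi}{2},\, \theta_2 + \tfrac{\pi}{2}\bigr).
\]

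Next, I invoke the preceding proposition: an $\alpha$-plane is the cone over a $(1,1)$-curve $\{\theta_1 - \theta_2 = \text{const}\}$ on the torus, while a $\beta$-plane is the cone over a $(1,-1)$-curve $\{\theta_1 + \theta_2 = \text{const}\}$. Since $\mathbb{J}^+$ shifts $\theta_1$ and $\theta_2$ by the same amount, it preserves the quantity $\theta_1 - \theta_2$ and hence sends each $(1,1)$-curve to itself; taking cones, each $\alpha$-plane is mapped to itself by $\mathbb{J}^+$. Dually, $\mathbb{J}^-$ shifts $\theta_1$ and $\theta_2$ by opposite amounts, preserving $\theta_1 + \theta_2$ and each $(1,-1)$-curve, whence each $\beta$-plane is $\mathbb{J}^-$-invariant.

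The argument is essentially bookkeeping with angles, so I do not anticipate a serious obstacle. The one point requiring care is the sign convention that pairs the $(+)$ complex structure with the $\alpha$ family (rather than the $\beta$); this is settled by the angle-shift computation above, and can be cross-checked on the reference null basis $(1,0,1,0),\,(0,1,0,1)$ spanning a particular $\alpha$-plane, where $\mathbb{J}^+$ sends the first vector to the negative of the second, while the same test applied to the $\beta$-plane spanned by $(1,0,1,0),\,(0,1,0,-1)$ fails to close up under $\mathbb{J}^+$ but does under $\mathbb{J}^-$.
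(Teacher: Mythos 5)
Your proof is correct. Note that for this statement the paper gives no argument at all---it is simply quoted from \cite{gag2}---so the only internal comparison is with the analogous non-flat result, Proposition \ref{p:alphaplane} in Section \ref{s3}, which proves $\alpha$/$\beta$ membership of the planes $\Pi_\pm$ by applying ${\mathbb J}'$ or ${\mathbb J}$ directly to a pair of spanning vectors and checking the image stays in the plane. Your route is genuinely different and stays entirely within the flat torus picture the paper sets up: you show ${\mathbb J}^{\pm}$ act on the parameterized null cone as the angle shifts $(\theta_1,\theta_2)\mapsto(\theta_1-\tfrac{\pi}{2},\,\theta_2\mp\tfrac{\pi}{2})$ (your trigonometric identities check out), then invoke the preceding proposition identifying $\alpha$-planes ($\beta$-planes) with cones over level sets of $\theta_1-\theta_2$ (resp.\ $\theta_1+\theta_2$), which the appropriate shift preserves. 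This buys something the spanning-vector computation does not: it handles every plane in each family at once, and it makes the exclusivity of the pairing transparent, since the ${\mathbb J}^+$ shift visibly fails to preserve $\theta_1+\theta_2$; your concluding basis check then correctly pins down the sign convention pairing ${\mathbb J}^+$ with $\alpha$ rather than $\beta$. The one step you should make explicit is the passage from invariance of the circle to invariance of the plane: the cone over a $(1,\pm 1)$-curve, with $a$ ranging over ${\mathbb R}$, is precisely the linear plane (the parameterization is linear in $(a\cos\theta,a\sin\theta)$), and since ${\mathbb J}^{\pm}$ is invertible, the inclusion ${\mathbb J}^{\pm}P\subseteq P$ obtained from the angle shift upgrades to equality.
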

\vspace{0.1in}

Note that, in general, these only extend to compatible almost complex structures on a neutral manifold. The space of 
compatible almost complex structures on a neutral 4-manifold is referred to as the hyperbolic twistor space of the metric \cite{LaM}. 

Composition of either of the complex structures with the metric yields a 2-form, which is symplectic in the flat case. However, the 2-form does not 
tame the almost complex struture in the sense of Gromov \cite{gromov} - neutral metrics walk on the wild side.

Now consider a null vector $X\in{\mathbb R}^{2,2}$. The set of vectors orthogonal to $X$ is 3-dimensional and contains the vector $X$ itself. Choosing 
another null vector $Y$ which has ${\mathbb G}(X,Y)=1$, complete this to a frame $\{{e}_{+},{e}_{-},{e}_{0}=X,{f}_{0}=Y\}$ such that 
\[
{\mathbb G}=\left[
\begin{array}{ccc|c}
1 & 0 & 0 & 0\\
0 & -1 & 0 & 0\\
0 & 0 & 0 & 1 \\\hline
0 & 0 & 1 & 0 
\end{array}
\right].
\]
Clearly the hypersurface orthogonal to $X$ has a degenerate Lorentz metric and the set of null vectors at each point consists of two totally null planes, 
intersecting along the normal vector $X$. This structure exists on any null hypersurface in any neutral 4-manifold and will be considered in some detail 
in the constructions of this paper.

\vspace{0.2in}
\subsection{The conformal compactification of ${\mathbb R}^{2,2}$}

We will now conformally embed ${\mathbb R}^{2,2}$ as an open 4-ball in ${\mathbb R}^4$ so that the points at infinity in ${\mathbb R}^{2,2}$
form the boundary 3-sphere.

First, let us introduce the coordinate change $(x^1,x^2,x^3,x^4)\rightarrow(R_1,R_2,\theta_1,\theta_2)$ defined by the double polar transformation:
\begin{equation}\label{e:coordsfl}
x^1+ix^2=R_1e^{i\theta_1} \qquad\qquad x^3+ix^4=R_2e^{i\theta_2}.
\end{equation}
To bring the points at infinity (i.e. $R_1$ or $R_2$ going to infinity) in to a finite distance define
\[
\tan p=R_1+R_2 \qquad\qquad \tan q=R_1-R_2.
\]
Clearly the coordinates $(p,q,\theta_1,\theta_2)$, with 
\[
0\leq p <\pi/2 \qquad\qquad -p\leq q\leq p \qquad 0\leq \theta_1,\theta_2<2\pi,
\]
cover all of ${\mathbb R}^{2,2}$. Moreover, infinity has been brought in to the boundary $p=\pi/2$.

This boundary is in fact a 3-sphere bounding a 4-ball $B^4$, as can be seen by the identification of $(z_1,z_2)\in{\mathbb C}^2={\mathbb R}^4$ 
\begin{equation}\label{e:coordscomp}
z_1=p\sin(\psi/2)e^{i\theta_1} \qquad \qquad z_2=p\cos(\psi/2)e^{i\theta_2}, 
\end{equation}
where $q=p\cos\psi$ with $0\leq\psi\leq\pi$. The boundary is the 3-sphere $\partial B^4=S^3$ of radius $\pi/2$ and the tori parameterized by 
$(\theta_1,\theta_2)$ are exactly the hopf tori in $S^3$.

Under the diffeomorphism $(x^1,x^2,x^3,x^4)\rightarrow(p,q,\theta_1,\theta_2)$ the pull-back of the metric $\tilde{{\mathbb G}}$ is the conformal metric 
$\tilde{{\mathbb G}}=\Omega^2{\mathbb G}=4\cos^2p\cos^2q \;{\mathbb G}$
given by
\begin{equation}\label{e:confmet}
d\tilde{s}^2=dpdq+{\textstyle{\frac{1}{4}}}\sin^2(p+q)d\theta_1^2-{\textstyle{\frac{1}{4}}}\sin^2(p-q)d\theta_2^2,
\end{equation} 
which is a conformally flat, scalar-flat neutral metric, analogous to the Einstein static universe. The Ricci tensor 
of $\tilde{{\mathbb G}}$ has non-vanishing components:
\[
\tilde{R}_{pp}=\bar{R}_{qq}=2 \qquad\qquad \tilde{R}_{\theta_1\theta_1}=\sin^2(p+q) \qquad\qquad \tilde{R}_{\theta_2\theta_2}=\sin^2(p-q).
\]

Clearly, the boundary 3-sphere is null and this has interesting consequences. The normal vector lies in the sphere. In general the set of points on the 
boundary at which $d\Omega$ vanishes 
would be zero dimensional, the fact that the hypersurface is null (so that $|d\Omega|=0$ {\it everywhere} on the boundary) means that the zero locus 
is 1-dimensional. 

A short calculation shows that $d\Omega=0$ on $S^3$ when $q=\pm{\textstyle{\frac{\pi}{2}}}$. Since $p={\textstyle{\frac{\pi}{2}}}$, we have
$\psi\in\{0,\pi\}$ and equations (\ref{e:coordscomp}) tell us that the gradient of the conformal factor vanishes on a pair of Hopf-linked circles in the 
boundary.

We have now proven Theorem \ref{t1} and propose that the four conditions of this Theorem are natural for the conformal compactification 
of more general neutral 4-manifolds - with the Hopf link replaced by some other link in the boundary.

The metric induced on a null hypersurface by a neutral metric has degenerate signature $(0,+,-)$ and the null cone degenerates to a pair of 
totally null planes, called $\alpha-$planes and $\beta-$planes, which intersect on the normal to the hypersurface, which, being null, lies in the 
tangent space to the hypersurface. 

\vspace{0.1in}
\begin{Prop}
Both the $\alpha-$planes and $\beta-$planes on the boundary are integrable. 
\end{Prop}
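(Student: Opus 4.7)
The plan is to work in the coordinates $(p,q,\theta_1,\theta_2)$ of Section \ref{s2} and reduce the problem to an application of the Frobenius theorem.

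First, I would restrict the conformal metric (\ref{e:confmet}) to the boundary $\{p=\pi/2\}$; since $dp=0$ there, the induced degenerate metric is $\tfrac{1}{4}\cos^2 q\,(d\theta_1^2 - d\theta_2^2)$, with radical spanned by $\partial_q$. The transverse Hopf-torus factor inherits a Lorentz metric whose two null lines are tangent to $\partial_{\theta_1}\pm\partial_{\theta_2}$. Hence the two totally null plane distributions on the boundary must be
\[
D_\pm=\mathrm{span}\{\partial_q,\ \partial_{\theta_1}\pm\partial_{\theta_2}\},
\]
and a direct check against (\ref{e:confmet}) confirms that both planes are indeed totally null. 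Matching tangent vectors with the characterization of $\alpha$- and $\beta$-planes via $(1,\pm 1)$-curves on the Hopf tori in Subsection 2.1 identifies $D_+$ with one family and $D_-$ with the other.

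Second, I would invoke the Frobenius theorem. Each distribution is spanned by a pair of commuting coordinate vector fields, so the Lie brackets $[\partial_q,\partial_{\theta_1}\pm\partial_{\theta_2}]$ vanish identically and both $D_+$ and $D_-$ are involutive; hence both are integrable. As a sanity check one can exhibit the leaves as the annular surfaces $\{\theta_1\mp\theta_2=c\}$ in $S^3$, bounded by the Hopf link of Theorem \ref{t1}, which together recover the standard fibration of the Hopf-link complement.

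I expect no serious obstacle: the argument reduces to a short Lie-bracket computation in well-chosen coordinates. The one subtlety is smoothness of $D_\pm$ across the degeneracy of the chart along the Hopf link $q=\pm\pi/2$, which is handled by noting that $\partial_{\theta_1}\pm\partial_{\theta_2}$ are globally defined smooth nowhere-vanishing vector fields on $S^3$ --- the infinitesimal generators of the diagonal and anti-diagonal free $S^1$-actions on $\mathbb{C}^2$ --- so that $D_\pm$ extend as smooth rank-$2$ subbundles of $TS^3$, and integrability on the regular locus propagates by continuity to all of $S^3$.
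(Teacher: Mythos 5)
Your core argument is correct and is essentially the paper's own proof: you restrict the conformal metric to $\{p=\pi/2\}$, identify the same two totally null distributions $D_\pm=\mathrm{span}\{\partial_q,\,\partial_{\theta_1}\pm\partial_{\theta_2}\}$, and verify integrability -- checking involutivity via vanishing brackets of coordinate fields is trivially equivalent to the paper's check that the annihilating $1$-forms $\omega_\pm=d\theta_1\mp d\theta_2$ satisfy $\omega_\pm\wedge d\omega_\pm=0$.

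The final paragraph, however, asserts something false: $D_\pm$ do \emph{not} extend, even continuously, across the Hopf link $\{q=\pm\pi/2\}$. Near the link component $\{z_1=0\}$ write $z_1=\rho_1e^{i\theta_1}$; on the boundary sphere one has exactly $q=\pi/2-4\rho_1^2/\pi$, so $dq=-\tfrac{8}{\pi}\rho_1\,d\rho_1$ and $\partial_q$ is proportional to $\rho_1^{-1}\partial_{\rho_1}$. Hence, off the link, $D_\pm=\mathrm{span}\{\partial_{\rho_1},\,\partial_{\theta_1}\pm\partial_{\theta_2}\}$. Since $\partial_{\theta_1}=-y_1\partial_{x_1}+x_1\partial_{y_1}$ vanishes on the link while $\partial_{\rho_1}=\cos\theta_1\,\partial_{x_1}+\sin\theta_1\,\partial_{y_1}$ is the radial direction of the collapsing disc, the limit of $D_\pm$ along the ray $\theta_1=c$ is $\mathrm{span}\{\cos c\,\partial_{x_1}+\sin c\,\partial_{y_1},\,\partial_{\theta_2}\}$, which depends on $c$: there is no well-defined limiting plane. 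The nonvanishing of $\partial_{\theta_1}\pm\partial_{\theta_2}$ alone does not make the two-plane field smooth, because the other spanning field degenerates. Your own description of the leaves already shows the obstruction: the surfaces $\{\theta_1\mp\theta_2=c\}$ are Seifert annuli all sharing the link as common boundary, so near the link they form an open book with binding the link, and an open book is not a foliation at its binding -- a foliation extending over the link would have a unique leaf through each of its points.

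This error does not sink the proposition, because at points of the link there are no $\alpha$- or $\beta$-planes to extend: there the conformal structure itself degenerates ($d\Omega=0$ precisely on the link, by Theorem \ref{t1}(iv), and the induced metric, written in coordinates smooth across the link, vanishes identically there), so the null cone of the degenerate metric is the whole tangent space rather than a pair of planes. The statement, like the paper's proof, should be read on the complement of the link, where your bracket computation is complete. Delete the final paragraph, or replace it by the remark that the plane fields are defined only off the link.
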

\begin{proof}
The pullback of the metric (\ref{e:confmet}) onto the boundary 3-sphere $p=\textstyle{\frac{\pi}{2}}$ is
\[
d\tilde{s}^2|_{{\mathbb S}^3}={\textstyle{\frac{1}{4}}}\cos^2q\left(d\theta_1^2-d\theta_2^2\right),
\]
and so the null cone is spanned by
\[
X_{\pm}=a\frac{\partial}{\partial q}+b\left(\frac{\partial}{\partial \theta_1}\pm\frac{\partial}{\partial \theta_2}\right).
\]
The 1-forms that vanish on these two planes are proportional to
\[
\omega_{\pm}=d\theta_1\mp d\theta_2,
\]
so that $\omega_{\pm}\wedge d\omega_{\pm}=0$ and the distributions are integrable.
\end{proof}

\vspace{0.1in}
Note here that the null planes intersect the tori $q=constant$ in the (1,1) and (1,-1) curves, which gives the null cone structure 
on these Lorentz tori.

\vspace{0.1in}

The existence of a conformal compactification with null boundary means that the metric ${\mathbb G}$ must be scalar flat at
infinity in the original 4-manifold, since by the well-known conformal change
\[
\Omega^2\bar{R}=R-6\Omega \bar{\triangle}\Omega+12|\bar{\nabla}\Omega|^2
\] 
along the null boundary  $|\bar{\nabla}\Omega|^2=0$ and so $R\rightarrow 0$ as $\Omega\rightarrow 0$. In the 4-manifolds we consider, it
is scalar flat throughout and so this obstruction does not arise.

\newpage
\section{Tangent Hypersurfaces}\label{s3}

\subsection{Flat 3-space}

\subsubsection{The neutral metric}

Interest in the neutral metric on the space of oriented geodesics of a 3-dimensional space of constant curvature has grown recently 
\cite{gag1} \cite{gk1} \cite{honda} \cite{sal05} \cite{Salvai}. The underlying smooth 4-manifold in the ${\mathbb R}^3$ case 
is the total space of the tangent bundle to the 2-sphere ${\mathbb L}({\mathbb R}^3)\equiv T{\mathbb S}^2$, and we adopt the notation of \cite{gk2} 
for the local description.

This identification is made concrete by choosing Euclidean coordinates $(x^1,x^2,x^3)$ and considering tangent vectors to the unit 2-sphere in 
the same ${\mathbb R}^3$. Thus, choosing holomorphic coordinates about the north pole on ${\mathbb S}^2$, the tangent vector
\[
V=\eta\frac{\partial}{\partial \xi}+ \bar{\eta}\frac{\partial}{\partial \bar{\xi}},
\]
for $\eta\in{\mathbb C}$ is identified with the oriented parameterized line $\gamma:{\mathbb R}\rightarrow{\mathbb R}^3:r\mapsto\gamma(r)$ given by
\begin{equation}\label{e:1a}
z=x^1+ix^2=\frac{2(\eta-\bar{\xi}^2\bar{\eta})}{(1+\xi\bar{\xi})^2}+\frac{2\xi}{1+\xi\bar{\xi}}r,
\end{equation}
\begin{equation}\label{e:1b}
x^3=-\frac{2(\xi\bar{\eta}+\bar{\xi}\eta)}{(1+\xi\bar{\xi})^2}+\frac{1-\xi\bar{\xi}}{1+\xi\bar{\xi}}r.
\end{equation}

In this transform, $(\xi,\eta)$ are local holomorphic coordinates for ${\mathbb L}({\mathbb R}^3)$ with the fibre over the south pole removed. In fact
${\mathbb L}({\mathbb R}^3)$ admits a pair of canonical complex structures ${\mathbb J}^+$ and ${\mathbb J}^-$  which when expressed in the 
coordinates $(\xi,\bar{\xi},\eta,\bar{\eta})$ take the form
\[
{\mathbb J}^+=\left[\begin{matrix}
i & 0&0&0 \\
0 & -i&0&0\\
0 & 0&i&0\\
0 & 0&0&-i
\end{matrix}
\right]
\qquad\qquad
{\mathbb J}^-=\left[\begin{matrix}
0 & 0&1&0 \\
0 & 0&0&-1\\
-1 & 0&0&0\\
0 & 1&0&0
\end{matrix}
\right].
\] 
In addition, there is a metric ${\mathbb G}$ on ${\mathbb L}({\mathbb R}^3)$ that is invariant under the Euclidean group, which takes the form
\begin{equation}\label{e:metric}
{\Bbb{G}}=2(1+\xi\bar{\xi})^{-2}{\Bbb{I}}\mbox{m}\left(d\bar{\eta} d\xi+\frac{2\bar{\xi}\eta}{1+\xi\bar{\xi}}d\xi d\bar{\xi}\right).
\end{equation}
Clearly, the metric is compatible with ${\mathbb J}^+$, but not with ${\mathbb J}^-$. The complex structure ${\mathbb J}^+$ has played a
significant role in holomorphic methods applied to Euclidean problems, such as monopoles \cite{hitch} and minimal surfaces \cite{weier}. 

The composition of ${\mathbb J}^+$ and the neutral metric ${\mathbb G}$ yields a symplectic form
\begin{equation}\label{e:symplectic}
\Omega=2(1+\xi\bar{\xi})^{-2}{\Bbb{R}}\mbox{e}\left(d\bar{\eta}\wedge d\xi+\frac{2\bar{\xi}\eta}{1+\xi\bar{\xi}}d\xi\wedge d\bar{\xi}\right).
\end{equation}

While this symplectic structure does not tame ${\mathbb J}^+$, it has the following property: a surface $\Sigma$ in ${\mathbb L}({\mathbb R}^3)$, 
that is, a 2-parameter family of oriented lines, is normal to a surface in ${\mathbb R}^3$ iff $\Sigma$ is Lagrangian: $\Omega_\Sigma=0$.

\vspace{0.2in}
\subsubsection{Tangent hypersurfaces}

For any smoothly embedded convex surface $S\subset{\mathbb R}^3$ define the {\it tangent hypersurface} 
${\mathcal H}(S)\subset{\mathbb L}({\mathbb R}^3)$ to be
\[
{\mathcal H}(S)=\{\gamma\in{\mathbb L}({\mathbb M}^3) \;|\;\gamma\in T_{\gamma\cap S}S\;\}.
\]
Clearly rotation about the normal to $S$ at a point $p$ generates a circle in ${\mathcal H}(S)$, so that the hypersurface is a circle bundle over $S$.

From now on we assume that $S$ is a closed strictly convex surface, so that ${\mathcal H}(S)$ is an embedded copy of the unit tangent bundle to $S$.
Moreover 

\vspace{0.1in}
\begin{Prop}
The hypersurface ${\mathcal H}(S)$ is null with respect to ${\mathbb G}$ and foliated by null geodesic circles.
\end{Prop}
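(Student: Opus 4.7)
The strategy is to identify the fiber circles of $\pi:\mathcal{H}(S)\to S$ as the integral curves of the degenerate direction of the pullback of $\mathbb{G}$ to $\mathcal{H}(S)$; then the nullity of $\mathcal{H}(S)$ follows from a single local pairing computation, while the geodesic character of the fibers is a standard fact about null hypersurfaces.

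First I would set up local coordinates $(u,v,\theta)$ on $\mathcal{H}(S)$, with $(u,v)$ parameterizing a patch of $S$, $\{e_1(u,v),e_2(u,v)\}$ an orthonormal frame of $T_{p(u,v)}S$, and $T(u,v,\theta)=\cos\theta\,e_1+\sin\theta\,e_2$ the oriented tangent direction. Composing with (\ref{e:1a})--(\ref{e:1b}) gives an immersion $(u,v,\theta)\mapsto(\xi,\eta)\in \mathcal{L}(\mathbb{R}^3)$, under which $\partial_\theta$ is the infinitesimal rotation of $\gamma$ about the $S$-normal at $p$, fixing $p$.

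Next I would prove the three vanishings $\mathbb{G}(\partial_\theta,\partial_\theta)=\mathbb{G}(\partial_\theta,\partial_u)=\mathbb{G}(\partial_\theta,\partial_v)=0$. Since the Euclidean group acts isometrically and transitively on $\mathcal{L}(\mathbb{R}^3)$, it suffices to verify them at one reference line: take $\gamma$ to be the $x^3$-axis, $p=0$ the origin, and $T_pS$ the $x^1x^3$-plane, so $(\xi_0,\eta_0)=(0,0)$. At this point the formula (\ref{e:metric}) collapses to $(1+\xi\bar\xi)^{-2}\,\mathrm{Im}(d\bar\eta\,d\xi+d\xi\,d\bar\eta)$, and each pairing reduces to the imaginary part of a product of the derivatives $\delta_\theta\xi,\delta_\theta\eta,\delta_u\xi,\delta_u\eta$ (and the $v$-analogues), computable directly from the parameterization. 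The key observation is $\delta_\theta\eta=0$, because the pivot point $p$ stays at the origin along the $\theta$-rotation; the remaining terms are either trivially zero or real, so their imaginary parts vanish. With these three orthogonality relations the induced metric on $\mathcal{H}(S)$ has signature $(0,+,-)$ with $\partial_\theta$ spanning the kernel, and hence $\mathcal{H}(S)$ is null. The foliation claim then follows from the standard structural fact that on a null hypersurface of any semi-Riemannian manifold the integral curves of the degenerate line field are null pregeodesics of the ambient metric; here these closed integral curves are precisely the fibers of $\pi$, which after reparameterization become null geodesic circles of $\mathbb{G}$.

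The main obstacle is the pairing computation $\mathbb{G}(\partial_\theta,\partial_u)=\mathbb{G}(\partial_\theta,\partial_v)=0$: in generic coordinates it entangles derivatives of the moving frame $\{e_1(u,v),e_2(u,v)\}$ along $S$ with the second-fundamental-form data of $S$, and one must show that these contributions combine to zero. The Euclidean reduction to a standard reference line circumvents this difficulty entirely by eliminating the frame derivatives, leaving only a short verification involving four scalar derivatives at a single point.
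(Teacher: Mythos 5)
Your proposal is correct, but it takes a genuinely different route from the paper. The paper works in Gauss-map coordinates adapted to the convex surface: it parameterizes ${\mathcal H}(S)$ by the normal direction $\nu$ and rotation angle $A$ via the support function (equations (\ref{e:2}) and (\ref{e:4a})), computes the full induced metric (\ref{e:indmet}), and reads off degeneracy in the $A$-direction; the geodesic nature of the fibres is not derived but quoted from \cite{gk1} (rotating an oriented line about a fixed line generates a null geodesic circle). You instead (a) use Euclidean invariance to reduce the orthogonality computation to one reference configuration, and (b) invoke the general fact that the radical of a null hypersurface is tangent to null pregeodesics. Your step (a) does check out: at your reference point the second term of (\ref{e:metric}) vanishes, $\delta_\theta\eta=0$ exactly (the rotated lines all pass through the origin), $\delta_\theta\xi$ is real, and for any $X$ tangent to ${\mathcal H}(S)$ one finds $\mathrm{Im}\,\delta_X\eta={\textstyle{\frac{1}{2}}}\langle d\pi(X),N\rangle=0$, so the frame-derivative and second-fundamental-form terms drop out exactly as you claim. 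Your route is shorter, more conceptual, and notably does not require convexity of $S$, matching the paper's broader assertion that ${\mathcal H}(S)$ is null for every smooth surface, whereas the paper's parameterization needs the Gauss map to be invertible. On the other hand, the paper's explicit formula (\ref{e:indmet}) is not a detour: it is the workhorse for everything that follows (Lemma \ref{l:albe}, the contact forms, the Legendrian characterization, the Reeb flow), so the computational approach buys the infrastructure for the rest of Section \ref{s3}. One fine point in your step (b): the pregeodesic argument exhibits the fibres as null geodesics only up to reparameterization, and an affine reparameterization of a closed pregeodesic need not have periodic affine parameter; here this is harmless, and in fact the fibres are orbits of rotational Killing fields which \cite{gk1} shows are genuinely closed null geodesics, but your argument by itself proves marginally less than the fact the paper quotes.
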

\begin{proof}

Rotating an oriented line about a line in ${\mathbb R}^3$ generates a null geodesic circle in ${\mathbb L}({\mathbb R}^3)$ \cite{gk1}. The 
tangent to these circles are in fact normal to ${\mathcal H}(S)$, as can be seen as follows.

Since $S\subset{\mathbb R}^3$ is convex it can be parameterized by the direction of its normal line. In local coordinates we have 
${\mathbb C}\rightarrow {\mathbb L}({\mathbb R}^3):\nu\mapsto (\xi=\nu,\eta=\eta_0(\nu,\bar{\nu}))$. It is well known that this is a Lagrangian section
of the canonical bundle $\pi:{\mathbb L}({\mathbb R}^3)\rightarrow{\mathbb S}^2$.

The point along the normal line where it intersects $S$ is determined by the support function $r_0:S\rightarrow{\mathbb R}$ which satisfies
\begin{equation}\label{e:pot}
\partial_\nu r_0=\frac{2\bar{\eta_0}}{1+\nu\bar{\nu}}.
\end{equation}
The sum and difference of the radii of curvature $r_1\geq r_2$ of $S$ are
\[
r_1+r_2=\psi_0
\qquad\qquad\qquad
r_1-r_2=|\sigma_0|,
\]
where
\begin{equation}\label{d:psi0s0}
\psi_0=r_0+2(1+\nu\bar{\nu})^2{\mathbb R}{\mbox{e }}\partial_\nu\left[\frac{\eta_0}{(1+\nu\bar{\nu})^2}\right]
\qquad\qquad\qquad
\sigma_0=-\partial_\nu\bar{\eta_0}.
\end{equation}

We are interested in the oriented lines that are tangent to $S$, that is, they are orthogonal to the normal.
\begin{Lem}
The oriented great circle in $S^2$ which is dual to the point with holomorphic coordinate $\nu$ is generated by
\begin{equation}\label{e:2}
\xi=\frac{\nu+e^{iA}}{1-\bar{\nu}e^{iA}},
\end{equation}
for $A\in [0,2\pi)$.
\end{Lem}

An oriented line $(\xi,\eta)$ passes through a point $(x^1,x^2,x^3)\in{\mathbb R}^3$ iff
\begin{equation}\label{e:3}
\eta={\textstyle{\frac{1}{2}}}\left(x^1+ix^2-2x^3\xi-(x^1-ix^2)\xi^2\right).
\end{equation}

Substituting equations (\ref{e:1a}), (\ref{e:1b}) with $(\xi,\eta)=(\nu,\eta_0)$ and $r=r_0$, and (\ref{e:2}) into (\ref{e:3}) yields
\begin{equation}\label{e:4a}
\eta=\frac{\eta_0-e^{2iA}\bar{\eta}_0-(1+\nu\bar{\nu})e^{iA}r_0}{(1-\bar{\nu} e^{iA})^2}.
\end{equation}

Thus, the hypersurface ${\mathcal H}(S)$ is locally parameterized by (\ref{e:2}) and (\ref{e:4a}) for $(\nu,\bar{\nu})$ varying over the normal 
directions of $S$ and $A\in S^1$. 

Pulling back the metric onto ${\mathcal H}$, we find that the induced metric in these coordinates (making use of equation (\ref{e:pot}) and definitions 
(\ref{d:psi0s0})) is
\begin{equation}\label{e:indmet}
ds^2=-\frac{2}{(1+\nu\bar{\nu})^2}{\mathbb I}{\mbox{m}}\left[(\sigma_0+\psi_0e^{-2iA})d\nu^2+\sigma_0e^{2iA}d\nu d\bar{\nu}\right].
\end{equation}
Thus the metric is degenerate along the null vector in the $A$-direction.  This completes the proof.
\end{proof}
\vspace{0.1in}

The null vectors tangent to ${\mathcal H}(S)$ form a pair of planes, the $\alpha-$planes and $\beta-$planes, which intersect on the null normal. 
The former planes are preserved  by the complex structure ${\mathbb J}^+$ and the latter by ${\mathbb J}^-$ \cite{gag2}.

\vspace{0.1in}
\begin{Prop}\label{p:con_flat}
If $S\subset{\mathbb R}^3$ is a smooth convex 2-sphere, then the $\alpha$-planes and $\beta$-planes of the neutral metric are both contact.
\end{Prop}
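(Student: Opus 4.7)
My plan is to exhibit explicit defining 1-forms $\omega^\pm$ for the $\alpha$- and $\beta$-plane distributions on ${\mathcal H}(S)$ and check $\omega^\pm\wedge d\omega^\pm\neq 0$ directly. The induced metric (\ref{e:indmet}) contains no $dA$, so $\partial/\partial A$ is the degenerate null normal and lies in both null planes; hence any defining form must be a real 1-form of the shape $\omega^\pm = P_\pm\, d\nu + \overline{P_\pm}\, d\bar\nu$ with $P_\pm = P_\pm(\nu,\bar\nu,A)$ complex.

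To locate the two transverse null lines I substitute $V = a\partial_\nu + \bar a\partial_{\bar\nu}$ with $a = \rho e^{i\phi}$ into (\ref{e:indmet}) and reduce the nullity condition to a single trigonometric equation for $2\phi$. Writing $\sigma_0 = s e^{i\beta}$ with $s = r_1 - r_2$ and $\psi_0 = r_1 + r_2$, a short calculation shows that this equation has discriminant
\[
(\psi_0 + s\cos(\beta+2A))^2,
\]
which is strictly positive, since strict convexity gives $\psi_0 > s \geq 0$. Hence there are two smooth null phases $\phi^\pm(\nu,\bar\nu,A)$, and the choice $P_\pm = i e^{-i\phi^\pm}$ yields the real 1-forms $\omega^\pm$ annihilating $\partial/\partial A$ together with the corresponding null direction. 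Their labelling as $\alpha$ or $\beta$ is fixed by requiring the associated 2-plane to be invariant under ${\mathbb J}^+$ or ${\mathbb J}^-$ respectively.

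The final step is to compute $\omega^\pm \wedge d\omega^\pm$ and extract the coefficient of the canonical 3-form $d\nu \wedge d\bar\nu \wedge dA$. The crucial contribution comes from $\partial_A \omega^\pm$, which is nonzero precisely because $\phi^\pm$ rotates with $A$ (visible from the nullity equation above), and when wedged with $\omega^\pm$ produces a term whose scalar coefficient is governed by the same positive discriminant factor $(\psi_0 + s\cos(\beta+2A))^2 > 0$. This establishes contactness for both plane fields.

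The main obstacle I foresee is the behaviour at the umbilic locus $s = 0$: although the two null lines remain geometrically distinct there (distinguished by ${\mathbb J}^\pm$), the phase $\phi^\pm$ needs care to remain smooth and uniquely defined. If the explicit parametrisation becomes awkward, a more intrinsic route is to define $\omega^\pm$ via the splitting $T{\mathcal H}(S) = \langle\partial/\partial A\rangle \oplus W$ together with the restrictions of ${\mathbb J}^\pm$ to a transverse 2-plane $W$, thereby bypassing the explicit roots of the nullity equation altogether and reducing contactness to a ${\mathbb J}^\pm$-equivariant verification.
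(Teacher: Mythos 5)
Your setup is sound and is essentially the route the paper takes: $\partial/\partial A$ spans the radical of the induced metric (\ref{e:indmet}), the two null phases are found from a trigonometric equation in $2\phi$, defining forms $\omega^\pm$ annihilating $\langle\partial/\partial A\rangle$ plus the corresponding null direction are written down, and contactness reduces to the nonvanishing of $\partial_A\phi^\pm$. Your discriminant computation $(\psi_0+s\cos(\beta+2A))^2$ is correct, and it matches Lemma \ref{l:albe} of the paper, whose explicit solutions are $\phi^-=A+\pi/2$ (the $\beta$-plane) and $\phi^+=A-\arg\left(\psi_0+\sigma_0e^{2iA}\right)$ (the $\alpha$-plane).

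The gap is in your final step: the coefficient of $dA\wedge d\nu\wedge d\bar\nu$ in $\omega^\pm\wedge d\omega^\pm$ is \emph{not} governed by the discriminant. With your normalisation $\omega^\pm=ie^{-i\phi^\pm}d\nu-ie^{i\phi^\pm}d\bar\nu$ one gets exactly
\[
\omega^\pm\wedge d\omega^\pm=-2i\,(\partial_A\phi^\pm)\,dA\wedge d\nu\wedge d\bar\nu,
\]
and while $\partial_A\phi^-=1$ is automatic, for the $\alpha$-plane a computation gives
\[
\partial_A\phi^+=\frac{\psi_0^2-|\sigma_0|^2}{\left|\psi_0+\sigma_0e^{2iA}\right|^2}.
\]
If you differentiate the nullity equation $F(2\phi,A)=0$ implicitly, the discriminant-type factor appears in \emph{both} $\partial_AF$ and $\partial_uF$ and cancels; what survives is $\psi_0^2-|\sigma_0|^2=4r_1r_2$, i.e.\ the Gauss curvature, and this is the quantity that strict convexity makes positive (exactly the factor in the paper's formula for $\omega^+\wedge d\omega^+$). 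The two quantities are genuinely different: at a parabolic point, where $\psi_0=s$ so $r_1r_2=0$, your discriminant is still positive for generic $A$, yet $\phi^+=A-\tfrac{1}{2}(\beta+2A)=-\beta/2$ is constant in $A$ and the $\alpha$-distribution fails to be contact. So your argument, as written, would prove contactness of the $\alpha$-planes on non-convex surfaces, which is false; convexity must enter through $\psi_0^2-|\sigma_0|^2>0$, not through the discriminant. (Your worry about umbilics, by contrast, is unfounded: at $\sigma_0=0$ the phases are $\phi^+=A$ and $\phi^-=A+\pi/2$, smooth and well separated, since under strict convexity the discriminant is bounded below by $(\psi_0-s)^2>0$.)
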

\begin{proof}
Consider the induced metric (\ref{e:indmet}) and write down the null planes. In particular

\vspace{0.1in}
\begin{Lem}\label{l:albe}
The vector $\vec{X}\in T_{(\nu,A)}{\mathcal H}(S)$
\[
\vec{X}=a\frac{\partial}{\partial A}+b{\mathbb R}{\mbox{e}}\left[e^{iB}\frac{\partial}{\partial \nu}\right],
\]
for $a,b\in{\mathbb R}$, is null iff either
\[
B=A+\frac{1}{2i}\ln\left(\frac{\psi_0+\bar{\sigma}_0e^{-2iA}}{\psi_0+{\sigma}_0e^{2iA}}\right)
\qquad\qquad{\mbox or }\qquad\qquad
B=A+\frac{\pi}{2}.
\]
The former spans the $\alpha-$plane, while the latter the $\beta-$plane.
\end{Lem}
\vspace{0.1in}

The 1-form $\omega^+$ that vanishes on the $\alpha-$plane is
\begin{equation}\label{e:alcontf}
\omega^+=-2{\mathbb I}m\frac{e^{-iA}\psi_0+e^{iA}\sigma_0}{1+\nu\bar{\nu}}d\nu,
\end{equation}
and so
\[
\omega^+\wedge d\omega^+=-\frac{2i(\psi_0^2-\sigma_0\bar{\sigma}_0)}{(1+\nu\bar{\nu})^2}dA\wedge d\nu\wedge d\bar{\nu}.
\]
For a convex surface $\psi_0^2-\sigma_0\bar{\sigma}_0$ is never zero and so the distribution of $\alpha-$plane is contact.

On the other hand, the 1-form $\omega^-$ that vanishes on the $\beta-$plane is
\[
\omega^-=2{\mathbb R}e\;e^{-iA}d\nu,
\]
and so
\[
\omega^-\wedge d\omega^-=-2idA\wedge d\nu\wedge d\bar{\nu}.
\]
Thus the distribution of $\beta-$plane is contact.
\end{proof}
\vspace{0.1in}

Note that these hypersurfaces sit within a wider class of oriented lines passing through $S$ making an angle $0\leq a\leq\pi/2$ with the 
outward pointing normal:
\[
{\mathcal H}_a(S)=\{\gamma\in{\mathbb L}({\mathbb R}^3)\;|\;\gamma\cap S\neq\emptyset,\;  
  <\dot{\gamma},\hat{N}>=\cos a\;\},
\]
where $\dot{\gamma}$ is the direction of the oriented line $\gamma$ and $\hat{N}$ is the unit outward pointing normal vector. 

For $a=0$ this hypersurface degenerates to a Lagrangian surface in ${\mathbb L}({\mathbb R}^3)$, while for $a=\pi/2$ it is the tangent hypersurface. 
We refer to ${\mathcal H}_a(S)$ in the general $a>0$ case as the {\it constant angle hypersurface to S} which were first introduced in \cite{gk2}
while constructing a mod 2 neutral knot invariant.

The local equations for the ${\mathcal H}_a(S)$ (generalizing equations (\ref{e:2}) and (\ref{e:4a})) are
\begin{equation}\label{e:epstan}
\xi=\frac{\nu+\epsilon e^{iA}}{1-\bar{\nu}\epsilon e^{iA}}
 \qquad\qquad
\eta=\frac{\eta_0-\epsilon^2e^{2iA}\bar{\eta}_0-(1+\nu\bar{\nu})\epsilon e^{iA}r_0}{(1-\bar{\nu} \epsilon e^{iA})^2}
\end{equation}
where $\epsilon=\tan(a/2)$.

While these hypersurfaces are not null, they have the following property:

\vspace{0.1in}
\begin{Prop}
The hypersurface ${\mathcal H}_a(S)$ is null exactly at the oriented lines through an umbilic point on $S$ and at the oriented lines whose projection 
orthogonal to the normal is tangent to the lines of curvature of $S$.
\end{Prop}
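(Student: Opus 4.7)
The approach is to parallel the calculation that led to the induced metric (\ref{e:indmet}) on $\mathcal{H}(S)$, now starting from the parameterization (\ref{e:epstan}) of $\mathcal{H}_a(S)$ with $\epsilon=\tan(a/2)\in(0,1)$. Writing $\xi$ and $\eta$ as in (\ref{e:epstan}), I would differentiate to obtain $d\xi$ and $d\eta$ in terms of $d\nu$, $d\bar\nu$, $dA$, using (\ref{e:pot}) to trade derivatives of the support function $r_0$ for $\bar\eta_0$ and (\ref{d:psi0s0}) to introduce the invariants $\psi_0$ and $\sigma_0$. Substituting into (\ref{e:metric}) gives an induced pseudo-Riemannian metric $g(\epsilon)$ on the 3-manifold $\mathcal{H}_a(S)$ that by construction reduces to (\ref{e:indmet}) at $\epsilon=1$.

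The hypersurface is null at a point exactly when $\det g(\epsilon)$ vanishes there. Since the whole of $\mathcal{H}(S)$ is null, the factor $(1-\epsilon^2)$ must divide $\det g(\epsilon)$, and I expect a clean factorization of the form
\[
\det g(\epsilon)\;=\;C(\nu,\bar\nu)\,(1-\epsilon^2)^{k}\,F(\epsilon,A,\psi_0,\sigma_0,\bar\sigma_0),
\]
with $C$ nonvanishing and $F$ a trigonometric expression in $e^{iA}$ whose coefficients are built from $\psi_0$, $\sigma_0$, $\bar\sigma_0$ and $\epsilon$. For $\epsilon\in(0,1)$ the null locus is then the zero set of $F$.

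Interpreting this zero set geometrically should proceed in two steps. First, at an umbilic point $r_1=r_2$, so $\sigma_0=0$ by (\ref{d:psi0s0}), and I expect the remaining terms in $F$ to vanish identically in $A$, so that every line of $\mathcal{H}_a(S)$ through an umbilic point is null. Second, at a non-umbilic point I expect $F=0$ to reduce (after isolating the $\psi_0$-contribution) to a condition of the form $\operatorname{Im}(\sigma_0 e^{2iA})=0$, i.e.\ $e^{2iA}=\pm\,\bar\sigma_0/|\sigma_0|$. Under the support-function dictionary, $\arg\sigma_0$ encodes the principal directions of the shape operator at the foot point, while $A$ in (\ref{e:2}) parameterizes the rotation of $\gamma$ about the normal, hence determines the direction in $T_pS$ of the projection of $\gamma$ orthogonal to $\hat N$. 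Matching the two phases yields precisely the statement that this projection is tangent to a line of curvature.

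The main obstacle I foresee is the last step: turning the trigonometric condition on $A$ into the geometric tangency statement. This amounts to tracking, through the explicit formulas (\ref{e:epstan}) and the support-function formalism, how the principal frame of the second fundamental form is encoded in the phase of $\sigma_0$, and then verifying phase alignment with the direction determined by $A$. The computation of $\det g(\epsilon)$ itself is routine but lengthy, and the key bookkeeping is to keep terms grouped so that the factorization above becomes apparent and $F$ can be read off without cancellation errors.
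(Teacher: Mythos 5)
Your proposal is correct and follows essentially the same route as the paper: pull the neutral metric (\ref{e:metric}) back to the parameterization (\ref{e:epstan}), take the determinant of the induced metric, and read off the null locus from its factorization. The paper's computed determinant,
$\det {\mathbb G}|_{{\mathcal H}_a(S)}=-\frac{2\epsilon^2(1-\epsilon^2)^2(\sigma_0 e^{2iA}-\bar{\sigma}_0e^{-2iA})(\psi_0^2-\sigma_0\bar{\sigma}_0)}{(1+\epsilon^2)^4(1+\nu\bar{\nu})^4}$,
confirms exactly the factorization you anticipated: the factor $\psi_0^2-\sigma_0\bar{\sigma}_0$ never vanishes by convexity, so nullity reduces to $\operatorname{Im}(\sigma_0e^{2iA})=0$, i.e.\ either $\sigma_0=0$ (umbilic point) or phase alignment of $e^{2iA}$ with $\bar{\sigma}_0$ (projection tangent to a line of curvature).
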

\begin{proof}
This follows from pulling back the neutral metric (\ref{e:metric}) to the hypersurface (\ref{e:epstan}) and taking the determinant. The result is
\[
{\mbox{det }}{\Bbb{G}}|_{{\mathcal H}_a(S)}=-\frac{2\epsilon^2(1-\epsilon^2)^2
(\sigma_0 e^{2iA}-\bar{\sigma}_0e^{-2iA})(\psi_0^2-\sigma_0\bar{\sigma}_0)}{(1+\epsilon^2)^4(1+\nu\bar{\nu})^4},
\]
and the result follows.
\end{proof}

\vspace{0.1in}
We return to these hypersurfaces in Section \ref{s4} when considering normal neighbourhoods of Lagrangian discs in ${\mathbb L}({\mathbb R}^3)$.

Given the two contact distributions, introduce the following terminology:
\vspace{0.1in}
\begin{Def}
A knot ${\mathcal C}\subset{\mathcal H}(S)$ is {\it $\alpha$-Legendrian} ({\it $\beta$-Legendrian}) if its tangent lies in an $\alpha-$plane 
($\beta-$plane) at each point. 

The {\it contact curve} of ${\mathcal C}$ is the curve $c=\pi({\mathcal C})\subset S$ obtained by the canonical projection 
$\pi:{\mathcal H}(S)\rightarrow S$. 
\end{Def}
\vspace{0.1in}

The first part of Theorem \ref{t2} has been established by Proposition \ref{p:con_flat}, and we now prove the second part.

Let $c\subset S$ be a curve on a convex surface parameterized by arc-length $u\mapsto(x^1(u),x^2(u),x^3(u))$. Let $(\nu,\eta_0)$ be the 
outward pointing normal line to $S$ along $c$ so that 
\begin{equation}\label{e:2a}
z=x^1+ix^2=\frac{2(\eta_0-\bar{\nu}^2\bar{\eta}_0)}{(1+\nu\bar{\nu})^2}+\frac{2\nu}{1+\nu\bar{\nu}}r_0,
\end{equation}
\begin{equation}\label{e:2b}
x^3=-\frac{2(\nu\bar{\eta}_0+\bar{\nu}\eta_0)}{(1+\nu\bar{\nu})^2}+\frac{1-\nu\bar{\nu}}{1+\nu\bar{\nu}}r_0.
\end{equation}
where $r_0:S\rightarrow{\mathbb R}$ is the support function of $S$. 

To find the tangent line to $c$, differentiate equations (\ref{e:2a}) and (\ref{e:2b}) with respect to $u$ to find
\[
\dot{z}=\frac{2}{(1+\nu\bar{\nu})^2}\left[(\psi_0+\sigma_0\nu^2)\dot{\nu}-(\psi_0\nu^2+\sigma_0)\dot{\bar{\nu}}\right]
\]
\[
\dot{x^3}=-\frac{2}{(1+\nu\bar{\nu})^2}\left[(\psi_0\bar{\nu}-\sigma_0\nu)\dot{\nu}+(\psi_1\xi_1-\bar{\sigma}_0\bar{\nu})\dot{\bar{\nu}}\right],
\]
where have substituted for the derivatives of $\eta_0$ and $r_0$ using equation (\ref{e:pot}) and the definitions of $\sigma_0$ and $\psi_0$ which yield: 
\[
\dot{\eta_0}=\frac{\partial\eta_0}{\partial\nu}\dot{\nu}+\frac{\partial\eta_0}{\partial\bar{\nu}}\dot{\bar{\nu}}
=\left(\psi_0-r_0+\frac{2\bar{\nu}\eta_0}{1+\nu\bar{\nu}}\right)\dot{\nu}-\bar{\sigma}_0\dot{\bar{\nu}}
\]
\[
\dot{r_0}=\frac{\partial r_0}{\partial\nu}\dot{\nu}+\frac{\partial r_0}{\partial\bar{\nu}}\dot{\bar{\nu}}
=\frac{2\bar{\eta}_0}{(1+\nu\bar{\nu})^2}\dot{\nu}+\frac{2\eta_0}{(1+\nu\bar{\nu})^2}\dot{\bar{\nu}}.
\]
The curve is parameterized by arclength iff
\[
|\vec{T}|^2=\dot{z}\dot{\bar{z}}+(\dot{x^3})^2=\frac{4}{(1+\nu\bar{\nu})^2}
    \left|\psi_0\dot{\nu}-\bar{\sigma}_0\dot{\bar{\nu}}\right|^2=1,
\]
where $\vec{T}$ is the tangent vector to $c$.

That is, there exists $\hat{\beta}\in [0,2\pi)$ such that 
\[
\psi_0\dot{\nu}-\bar{\sigma}_0\dot{\bar{\nu}}=\frac{1}{2}(1+\nu\bar{\nu})e^{i\hat{\beta}},
\]
inverting this last equation (with the aid of its conjugate)
\[
\dot{\nu}=\frac{(1+\nu\bar{\nu})}{2(\psi_0^2-|\sigma_0|)}\left[\psi_0e^{i\hat{\beta}}+\bar{\sigma}_0e^{-i\hat{\beta}}\right].
\]
Now comparing this with
\[
\vec{T}=\dot{z}\frac{\partial }{dz}+\dot{\bar{z}}\frac{\partial }{d\bar{z}}+\dot{x^3}\frac{\partial }{dx^3}
=\frac{2\xi}{1+\xi\bar{\xi}}\frac{\partial }{dz}+\frac{2\bar{\xi}}{1+\xi\bar{\xi}}\frac{\partial }{d\bar{z}}
   +\frac{1-\xi\bar{\xi}}{1+\xi\bar{\xi}}\frac{\partial }{dx^3}
\]
where $\xi$ is given by equation (\ref{e:2}), we find that $\hat{\beta}=A$. Thus, the tangent to the curve ${\mathcal C}\subset{\mathcal H}(S)$ at a point
is of the form  
\[
\vec{X}=a\frac{\partial}{\partial A}+b{\mathbb R}{\mbox{e}}\left[e^{iB}\frac{\partial}{\partial \nu}\right],
\]
for $a,b\in{\mathbb R}$ with
\[
B=A+\frac{1}{2i}\ln\left(\frac{\psi_0+\bar{\sigma}_0e^{-2iA}}{\psi_0+{\sigma}_0e^{2iA}}\right).
\]
By Lemma \ref{l:albe}, this vector is contained in an $\alpha$-plane, as claimed.

On the other hand, the normal $\vec{N}$ to the curve $c$ gives rise to the vector $\vec{X}$ with
\[
B=A+\frac{1}{2i}\ln\left(\frac{\psi_0+\bar{\sigma}_0e^{-2iA}}{\psi_0+{\sigma}_0e^{2iA}}\right)+\frac{\pi}{2},
\]
and this is contained in a $\beta$-plane iff either $\sigma_0=0$, in which case the point is umbilic, or if ${\sigma}_0e^{2iA}$ is real, in
which case the curve $c$ is a line of curvature. 

Similarly, if ${\mathcal C}$ is $\beta$-Legendrian, then the oriented lines are normal
to $c$ iff either $\sigma_0=0$, in which case the point is umbilic, or if ${\sigma}_0e^{2iA}$ is real, in
which case the curve $c$ is a line of curvature. 

To prove the final part of Theorem \ref{t2} consider the contact 1-form $\omega^+$ defined in equation (\ref{e:alcontf}). 

The Reeb vector field associated with $\omega^+$ is easily found to be
\begin{align}
X=&\frac{i(1+\nu\bar{\nu})}{2(\psi_0^2-\sigma_0\bar{\sigma}_0)}\left[(\psi_0e^{iA}-\bar{\sigma}_0e^{-iA})\frac{\partial}{\partial\nu}+
  (\psi_0e^{-iA}-{\sigma}_0e^{iA})\frac{\partial}{\partial\bar{\nu}}\right]\nonumber\\
&\qquad+\frac{1}{2(\psi_0^2-\sigma_0\bar{\sigma}_0)}
\left[(\psi_0\bar{\nu}-\sigma_0\nu)e^{iA}+(\psi_0\nu-\bar{\sigma}_0\bar{\nu})e^{-iA})\right]\frac{\partial}{\partial A}\nonumber
\end{align}

We conclude that flowing by the Reeb vector using a parameter $r$ leads to the flow
\[
\frac{d \nu}{dr}=\frac{i(1+\nu\bar{\nu})}{2(\psi_0^2-|\sigma_0|^2)}\left(\psi_0e^{iA}-\bar{\sigma}_0e^{-iA}\right)
\]
\[
\frac{d A}{dr}=\frac{1}{2(\psi_0^2-|\sigma_0|^2)}
\left[\left(\psi_0\bar{\nu}-\sigma_0\nu\right)e^{iA}+\left(\psi_0\nu-\bar{\sigma}_0\bar{\nu}\right)e^{-iA}\right]
\]

This flow can be understood by considering the geodesic flow on $S$ which induces the following flow on ${\mathcal H}(S)$:
\[
\frac{d \nu}{d\tau}=\frac{(1+\nu\bar{\nu})}{2(\psi_0^2-|\sigma_0|^2)}\left(\psi_0e^{iA}+\bar{\sigma}_0e^{-iA}\right)
\]
\[
\frac{d A}{d\tau}=-\frac{i}{2(\psi_0^2-|\sigma_0|^2)}
\left[\left(\psi_0\bar{\nu}-\sigma_0\nu\right)e^{iA}-\left(\psi_0\nu-\bar{\sigma}_0\bar{\nu}\right)e^{-iA}\right]
\]

The Reeb flow is obtained from the geodesic flow by replacing $A$ by $A+\pi/2$. Thus integral curves of the Reeb flow consists of the  
oriented lines along a geodesic of $S$ that are orthogonal to the geodesic.

This completes the proof of Theorem \ref{t2} in the flat case.

\vspace{0.1in}


\subsection{The non-flat case}

\subsubsection{The neutral metric}

For $\epsilon\in\{-1,1\}$ consider the following flat metrics in ${\mathbb R}^4$:
\[
\left<.,.\right>_{\epsilon}=\epsilon(dx^1)^2+\epsilon(dx^2)^2+\epsilon(dx^3)^2+(dx^4)^2.
\]
Let ${\mathbb S}_{\epsilon}^3=\{x\in{\mathbb R}^4\; :\; \left<x,x\right>_{\epsilon}=1\}$ be the 3-(pseudo)-sphere in the Euclidean space 
${\mathbb R}_{\epsilon}^4:=({\mathbb R}^4,\left<.,.\right>_{\epsilon})$. Note that ${\mathbb S}_{1}^3$ is the standard 3-sphere, while 
${\mathbb S}_{-1}^3$ is anti-isometric to the hyperbolic 3-space ${\mathbb H}^3$. 

Let $\iota:{\mathbb S}_{\epsilon}^3\hookrightarrow {\mathbb R}^4$ be the inclusion map and denote by $g_{\epsilon}$ the induced metric 
$\iota^{\ast}\left<.,.\right>_{\epsilon}$. The space of oriented geodesics ${\mathbb L}({\mathbb S}_{\epsilon}^3)$ of 
$({\mathbb S}_{\epsilon}^3,g_{\epsilon})$ is 4-dimensional and ${\mathbb L}({\mathbb S}_{1}^3)$ can be identified with the Grasmannian 
of oriented planes in ${\mathbb R}_1^4$, while ${\mathbb L}({\mathbb S}_{-1}^3)$ can be identified with the Grasmannian of oriented planes 
in ${\mathbb R}_{-1}^4$ such that the induced metric is Lorentzian \cite{An}. 

Thus, ${\mathbb L}({\mathbb S}_{\epsilon}^3)$ is the following submanifold of the space $\Lambda^2({\mathbb R}^4)$ of bivectors in ${\mathbb R}^4$:
$${\mathbb L}({\mathbb S}_{\epsilon}^3)=\{x\wedge y\in\Lambda^2({\mathbb R}^4)\; :\; y\in \mbox{T}_x{\mathbb S}_{\epsilon}^3,\; 
\left<y,y\right>_{\epsilon}=\epsilon\}.$$
In fact, an element $x\wedge y\in {\mathbb L}({\mathbb S}_{\epsilon}^3)$ is the oriented geodesic $\gamma\subset {\mathbb S}_{\epsilon}^3$ 
passing through $x\in {\mathbb S}_{\epsilon}^3$ and has direction $y\in \mbox{T}_x{\mathbb S}_{\epsilon}^3$ with $\left<y,y\right>_{\epsilon}=\epsilon$.

Endow $\Lambda^2({\mathbb R}^4)$ with the flat metric $\left<\left<.,.\right>\right>_{\epsilon}$ defined by:
\[
\left<\left<x_1\wedge y_1,x_2\wedge y_2\right>\right>_{\epsilon}=\left<x_1,x_2\right>_{\epsilon}
\left<y_1,y_2\right>_{\epsilon}-\left<x_1,y_2\right>_{\epsilon}\left<y_1,x_2\right>_{\epsilon}.
\] 
If $x\wedge y\in {\mathbb L}({\mathbb S}_{\epsilon}^3)$, the tangent space $\mbox{T}_{x\wedge y}{\mathbb L}({\mathbb S}_{\epsilon}^3)$ 
is the vector space  consisting of vectors of the form $x\wedge X+y\wedge Y$, where 
$X,Y\in (x\wedge y)_{\epsilon}^{\bot}=\{\xi\in \Lambda^2({\mathbb R}^4)\; :\; \left<\xi,x\right>_{\epsilon}=\left<\xi,y\right>_{\epsilon}=0\}$.

A complex (resp. paracomplex) structure $J$ can be defined in the oriented plane $x\wedge y\in {\mathbb L}({\mathbb S}_{1}^3)$ 
(resp. ${\mathbb L}({\mathbb S}_{-1}^3)$) by $Jx=y$ and $Jy=-x$ (resp. $Jy=x$) and let $J'$ be the complex structure on 
the oriented plane $(x\wedge y)_{\epsilon}^{\bot}$. Define the endomorphisms ${\mathbb J}$ and ${\mathbb J}'$ on 
$T_{x\wedge y}{\mathbb L}({\mathbb S}_{\epsilon}^3)$ as follows:
\[
{\mathbb J}(x\wedge X+y\wedge Y)=Jx\wedge X+Jy\wedge Y=y\wedge X-\epsilon x\wedge Y,
\]
and 
\[
{\mathbb J}'(x\wedge X+y\wedge Y)=x\wedge J'(X)+y\wedge J'(Y).
\]
For $\epsilon=1$ (resp. $\epsilon=-1$) ${\mathbb J}$ is a complex (resp. paracomplex) structure on ${\mathbb L}({\mathbb S}_{\epsilon}^3)$, while 
${\mathbb J}'$ is a complex structure for $\epsilon=\pm1$ \cite{agk} \cite{An} \cite{CU}.

Denoting the inclusion map by $\iota:{\mathbb L}({\mathbb S}_{\epsilon}^3)\hookrightarrow \Lambda^2({\mathbb R}^4)$, the metric 
$\iota^{\star}\left<\left<.,.\right>\right>_{\epsilon}$ is Riemannian and Einstein \cite{Leich}. The metric 
${\mathbb G}_{\epsilon}=-\iota^{\star}\left<\left<.,{\mathbb J}\circ {\mathbb J}'.\right>\right>_{\epsilon}$ is of neutral signature, 
locally conformally flat and is invariant under the natural action of the group $SO\displaystyle((7+\epsilon)/2,(1-\epsilon)/2)$ of 
isometries of ${\mathbb S}^3_{\epsilon}$. Additionally, both structures 
$({\mathbb L}({\mathbb S}_{\epsilon}^3),{\mathbb J},\iota^{\star}\left<\left<.,.\right>\right>_{\epsilon})$ and 
$({\mathbb L}({\mathbb S}_{\epsilon}^3),{\mathbb J}',{\mathbb G}_{\epsilon})$ are (para-) K\"ahler manifolds \cite{agk} \cite{An} 
\cite{gag1} \cite{honda}.

\vspace{0.2in}
\subsubsection{Tangent hypersurfaces}

Consider an oriented smooth surface $S$ of ${\mathbb S}_{\epsilon}^3$ given by the immersion $\phi:\overline{S}\rightarrow {\mathbb S}_{\epsilon}^3$, with $S=\phi(\overline{S})$. Let $(e_1,e_2)$ be an oriented orthonormal frame of the tangent bundle of $S$ and let $N$ be the  unit normal vector field such that $(\phi,e_1,e_2,N)$ is a positive oriented orthonormal frame in ${\mathbb R}_{\epsilon}^4$. Then 
\[
\left<\phi,\phi\right>_{\epsilon}=\epsilon\left<e_1,e_1\right>_{\epsilon}=\epsilon\left<e_2,e_2\right>_{\epsilon}=\epsilon\left<N,N\right>_{\epsilon}=1.
\]

 For $\theta\in {\mathbb S}^1$, define the following tangential vector fields
\[
v(x,\theta)=\cos\theta\; e_1+\sin\theta\; e_2,\qquad v^{\bot}(x,\theta)=-\sin\theta\; e_1+\cos\theta\; e_2.
\]
As in the flat case, the tangent hypersurface $\mathcal{H}(S)$ in ${\mathbb L}({\mathbb S}_{\epsilon}^3)$ is the image of the immersion 
$\overline{\phi}:\overline{S}\times {\mathbb S}^1\rightarrow {\mathbb L}({\mathbb S}_{\epsilon}^3):(x,\theta)\mapsto\phi(x)\wedge v(x,\theta).$

Identify $e_i$ with $d\phi(e_i)$ and assume that $(e_1,e_2)$ diagonalize the shape operator, that is, $h(e_i,e_j)=k_i\delta_{ij}$, where $k_i$  and 
$h$ denote the principal curvatures and second fundamental form, respectively. 

If $\nabla$ denotes the Levi-Civita connection of the induced metric $\phi^{\ast} g_{\epsilon}$ and setting $v_1:=\left<\nabla_{e_1}v,v^{\bot}\right>_{\epsilon}$ and $v_2:=\left<\nabla_{e_2}v,v^{\bot}\right>_{\epsilon}$, the derivative of $\overline\phi$ is given by:
\begin{equation}\label{e:equa1}
 \left.\begin{aligned}
      d\overline\phi(e_1)&=v_1\phi\wedge v^{\bot}+k_1\cos\theta\;\phi\wedge N+\sin\theta\; v\wedge v^{\bot}\\
d\overline\phi(e_2)&=v_2\phi\wedge v^{\bot}+k_2\sin\theta\;\phi\wedge N-\cos\theta\; v\wedge v^{\bot}\\
 d\overline\phi(\partial/\partial\theta)&=\phi\wedge v^{\bot}.\end{aligned}
 \right\} \\
\end{equation}
A direct computation shows that 
$${\mathbb G}_{\epsilon}(d\overline\phi(\partial/\partial\theta), d\overline\phi(e_1))={\mathbb G}_{\epsilon}(d\overline\phi(\partial/\partial\theta), 
d\overline\phi(e_2))=0.$$ 
 In addition, $d\overline\phi(\partial/\partial\theta)$ is null, that is, 
\begin{eqnarray}
{\mathbb G}_{\epsilon}(d\overline\phi(\partial/\partial\theta),d\overline\phi(\partial/\partial\theta))&=& 0.\nonumber
\end{eqnarray}
Now, a brief computation gives
\[
{\mathbb G}_{\epsilon}( d\overline\phi\, e_1, d\overline\phi\, e_1){\mathbb G}_{\epsilon}( d\overline\phi\,e_2, d\overline\phi\,e_2)
-{\mathbb G}_{\epsilon}(d\overline\phi\,e_1, d\overline\phi\,e_2)^2=-(k_2\sin^2\theta+k_1\cos^2\theta)^2.
\]
Thus, $d\overline\phi(\partial/\partial\theta)$ is a tangential vector field and a normal vector field of the hypersurface 
$\mathcal{H}(S)$. The induced metric $\overline\phi^{\ast}{\mathbb G}_{\epsilon}$ is of signature $(+-0)$.

Let $\rho_1=d\overline\phi(e_1)$ and $\rho_2$ be defined by
\begin{equation}\label{e:rho2}
\rho_2=\frac{2k_1v_2\cos\theta\sin\theta+(k_1\cos^2\theta-k_2\sin^2\theta)v_1}{k_1\cos^2\theta+k_2\sin^2\theta}\phi\wedge v^{\bot}
 +k_1\cos\theta\;\phi\wedge N-\sin\theta\; v\wedge v^{\bot}.
\end{equation}
Consider the null vectors $e_+$ and $e_-$ by $e_+=\rho_1+\rho_2$ and $e_-=\rho_1-\rho_2$. 

If $e_0=d\overline\phi(\partial/\partial\theta)$, define the null planes $\Pi_+:=\mbox{span}\{e_+,e_0\}$ and 
$\Pi_-:=\mbox{span}\{e_-,e_0\}$. A brief computation shows that 
\[
\Pi_+=\mbox{span}\{\phi\wedge v^{\bot},\phi\wedge N\}\;\;\mbox{and}\;\; \Pi_-=\mbox{span}\{\phi\wedge v^{\bot}, v\wedge v^{\bot}\}.
\]

\begin{Prop}\label{p:alphaplane}
The plane $\Pi_{+}$ is an $\alpha$-plane, while $\Pi_{-}$ is a $\beta$-plane.
\end{Prop}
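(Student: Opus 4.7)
My plan is first to recognize the cleaner descriptions of $\Pi_\pm$ by computing $e_+=\rho_1+\rho_2$ and $e_-=\rho_1-\rho_2$ directly from (\ref{e:equa1}) and (\ref{e:rho2}), and then to verify invariance of the resulting planes under the appropriate (para-)complex structure. The guiding principle is the direct analogue of the flat case treated in Section \ref{s2}: the structure ${\mathbb J}'$, which makes $({\mathbb L}({\mathbb S}^3_\epsilon),{\mathbb J}',{\mathbb G}_\epsilon)$ (para-)K\"ahler, plays the role of ${\mathbb J}^+$ and should preserve $\alpha$-planes, while ${\mathbb J}$ preserves $\beta$-planes.

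First I would compare the three components of $\rho_1$ and $\rho_2$ in the basis $\{\phi\wedge v^\bot,\phi\wedge N, v\wedge v^\bot\}$. Reading off (\ref{e:equa1}) and (\ref{e:rho2}), the $v\wedge v^\bot$ coefficients of $\rho_1$ and $\rho_2$ are $+\sin\theta$ and $-\sin\theta$ respectively, while the $\phi\wedge N$ coefficients are both $k_1\cos\theta$. Consequently $e_+$ has vanishing $v\wedge v^\bot$ component, whereas $e_-$ has vanishing $\phi\wedge N$ component. Adjoining $e_0=\phi\wedge v^\bot$ then yields
\[
\Pi_+ = \mbox{span}\{\phi\wedge v^\bot,\phi\wedge N\},\qquad \Pi_- = \mbox{span}\{\phi\wedge v^\bot, v\wedge v^\bot\},
\]
generically, with continuity handling the loci where one of the spanning coefficients degenerates.

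Second I would verify invariance of these planes under the appropriate endomorphisms. For $\Pi_+$, both spanning vectors have the form $\phi\wedge X$ with $X\in (\phi\wedge v)^\bot_\epsilon = \mbox{span}\{v^\bot,N\}$, and ${\mathbb J}'$ acts by $\phi\wedge X\mapsto \phi\wedge J'(X)$. Since $J'$ is a complex structure on this 2-plane, it permutes $v^\bot$ and $N$ up to sign, so ${\mathbb J}'(\Pi_+)\subseteq \Pi_+$. For $\Pi_-$, the formula ${\mathbb J}(x\wedge X+y\wedge Y) = y\wedge X-\epsilon x\wedge Y$ evaluated at the base point $x\wedge y=\phi\wedge v$ gives ${\mathbb J}(\phi\wedge v^\bot) = v\wedge v^\bot$ and ${\mathbb J}(v\wedge v^\bot) = -\epsilon\,\phi\wedge v^\bot$, both of which lie in $\Pi_-$. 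Hence $\Pi_+$ is ${\mathbb J}'$-invariant and $\Pi_-$ is ${\mathbb J}$-invariant.

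The main obstacle is less computational than conventional: one must justify that the labeling $\alpha$/$\beta$ assigned by ${\mathbb J}'$-invariance (respectively ${\mathbb J}$-invariance) is consistent with the labeling inherited from the flat $\mathbb{R}^{2,2}$ setting of Section \ref{s2}, where ${\mathbb J}^+$ preserves $\alpha$-planes per \cite{gag2}. Because ${\mathbb J}'$ is the direct analogue of ${\mathbb J}^+$ as the (para-)complex structure compatible with the neutral metric ${\mathbb G}_\epsilon$, this labeling is the natural one and the identification in the proposition then follows.
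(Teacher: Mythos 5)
Your proposal is correct and follows essentially the same route as the paper: identify $\Pi_+=\mbox{span}\{\phi\wedge v^{\bot},\phi\wedge N\}$ and $\Pi_-=\mbox{span}\{\phi\wedge v^{\bot},v\wedge v^{\bot}\}$ (the paper records this as a brief computation just before the Proposition), then check that $\Pi_+$ is ${\mathbb J}'$-invariant and $\Pi_-$ is ${\mathbb J}$-invariant, concluding via the characterization of totally null ${\mathbb J}'$-holomorphic planes as $\alpha$-planes and ${\mathbb J}$-holomorphic ones as $\beta$-planes. Your extra paragraph on the consistency of the $\alpha$/$\beta$ labeling is a point the paper handles implicitly through its citation of \cite{gag2}, so no substantive difference remains.
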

\begin{proof}
If $\xi\in \Pi_+$, we have that $\xi=\xi_1\; \phi\wedge v^{\bot}+\xi_2\; \phi\wedge N$ and thus,
\[
{\mathbb J}'\xi=-\xi_1\;\phi\wedge N+\xi_2\; \phi\wedge v^{\bot}\in\Pi_+.
\]
Therefore, the null plane $\Pi_+$ is ${\mathbb J}'$-holomorphic and since it is totally null, it is therefore an $\alpha$-plane.

If $\xi\in \Pi_-$ we have that $\xi=\xi_1\; \phi\wedge v^{\bot}+\xi_2\; v\wedge v^{\bot}$. Then,
\[
{\mathbb J}\xi=\xi_1\; v\wedge v^{\bot}-\epsilon\xi_2\; \phi\wedge v^{\bot}\in\Pi_-,
\]
which shows that $\Pi_-$ is ${\mathbb J}$-holomorphic, and thus, $\Pi_-$ is a $\beta$-plane. 
\end{proof}

\vspace{0.1in}

The following Proposition establishes the first part of Theorem \ref{t2} in the non-flat cases:

\vspace{0.1in}
 
\begin{Prop}
Let $S$ be a smooth oriented convex surface in ${\mathbb S}_{\epsilon}^3$ and let $\mathcal{H}(S)$ be its tangent hypersurface. Then, $(\mathcal{H}(S),\Pi_+)$ and $(\mathcal{H}(S),\Pi_-)$ are both contact 3-manifolds.
\end{Prop}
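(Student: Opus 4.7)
The plan is to construct explicit 1-forms $\omega^+$ and $\omega^-$ annihilating $\Pi_+$ and $\Pi_-$ on the parameterization $\bar\phi:\bar S \times {\mathbb S}^1 \to \mathcal{H}(S)$, and then verify the contact non-integrability condition $\omega^\pm \wedge d\omega^\pm \neq 0$ directly, using strict convexity of $S$.

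First, I would read the defining 1-forms off from the decomposition (\ref{e:equa1}). A tangent vector $a e_1+b e_2+c\,\partial/\partial\theta$ at $(x,\theta)$ is sent by $d\bar\phi$ into $\Pi_+=\mbox{span}\{\phi\wedge v^\bot,\phi\wedge N\}$ exactly when its $v\wedge v^\bot$-coefficient $a\sin\theta-b\cos\theta$ vanishes, and into $\Pi_-=\mbox{span}\{\phi\wedge v^\bot,v\wedge v^\bot\}$ exactly when its $\phi\wedge N$-coefficient $ak_1\cos\theta+bk_2\sin\theta$ vanishes. Letting $\alpha_1,\alpha_2$ denote the coframe dual to the shape-operator eigenframe $(e_1,e_2)$, this yields the natural candidates
\[
\omega^+=-\sin\theta\,\alpha_1+\cos\theta\,\alpha_2,\qquad \omega^-=k_1\cos\theta\,\alpha_1+k_2\sin\theta\,\alpha_2,
\]
both vanishing on $\partial/\partial\theta$, so the null normal $e_0=\phi\wedge v^\bot$ lies in each kernel as demanded by Proposition \ref{p:alphaplane}.

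Next, I would evaluate $d\omega^\pm$ on the spanning pair $(\partial/\partial\theta,Y_\pm)$ of $\ker\omega^\pm$, where $Y_+=\cos\theta\, e_1+\sin\theta\, e_2$ and $Y_-=k_2\sin\theta\,e_1-k_1\cos\theta\, e_2$. Because $\omega^\pm(\partial/\partial\theta)\equiv 0$ and $\omega^\pm(Y_\pm)\equiv 0$, Cartan's formula collapses to $d\omega^\pm(\partial/\partial\theta, Y_\pm)=-\omega^\pm([\partial/\partial\theta, Y_\pm])$. Since $e_1,e_2$ are $\theta$-independent, differentiating only the $\theta$-dependent scalar coefficients gives $[\partial/\partial\theta,Y_+]=-\sin\theta\, e_1+\cos\theta\, e_2=v^\bot$ and $[\partial/\partial\theta,Y_-]=k_2\cos\theta\,e_1+k_1\sin\theta\, e_2$, whence
\[
d\omega^+(\partial/\partial\theta, Y_+)=-1,\qquad d\omega^-(\partial/\partial\theta, Y_-)=-k_1k_2.
\]

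The conclusion is then immediate from strict convexity: the product $k_1k_2$ is strictly positive on $S$, so $d\omega^\pm|_{\ker\omega^\pm}$ is non-degenerate, equivalently $\omega^\pm\wedge d\omega^\pm\neq 0$ pointwise on $\mathcal H(S)$. The main subtlety is global rather than computational: the shape-operator eigenframe $(e_1,e_2)$ need not extend smoothly across umbilic points, but since $\Pi_\pm$ are globally defined plane fields and the contact condition is pointwise, covering $\mathcal H(S)$ by charts in which such a frame exists (with any orthonormal frame serving at umbilics, where $k_1=k_2$) suffices to conclude that both $\Pi_+$ and $\Pi_-$ are contact distributions.
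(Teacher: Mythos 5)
Your proof is correct and follows essentially the same route as the paper: your $\omega^+$ and $\omega^-$ are (up to sign) precisely the paper's contact forms $\eta^3=\sin\theta\, e^1-\cos\theta\, e^2$ and $\eta^2=k_1\cos\theta\, e^1+k_2\sin\theta\, e^2$, read off from the same decomposition (\ref{e:equa1}), with strict convexity entering identically through $k_1k_2\neq 0$. The only difference is mechanical---you verify non-degeneracy of $d\omega^\pm$ restricted to $\ker\omega^\pm$ via Lie brackets, whereas the paper computes $\eta^3\wedge d\eta^3=e^1\wedge e^2\wedge d\theta$ and $\eta^2\wedge d\eta^2=-k_1k_2\, e^1\wedge e^2\wedge d\theta$ directly---and your closing caveat about the principal frame near umbilic points applies equally to the paper's argument, which assumes a diagonalizing frame without comment.
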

\begin{proof}
Assuming that $S$ is convex, we have that $k_1k_2>0$ and thus 
\[
k_1(x)\cos^2\theta+k_2(x)\sin^2\theta\neq 0,\qquad\forall\; (x,\theta)\in \mathcal{H}(S).
\]
Set $\eta_1=\phi\wedge v^{\bot}, \eta_2=\phi\wedge N$ and $\eta_3=v\wedge v^{\bot}$. We simply write $e_i$ for the tangential vector fields 
$d\bar\phi (e_i)$ and $\partial/\partial\theta$ for the tangential vector field $d\bar\phi (\partial/\partial\theta)$. Then solving the relations 
(\ref{e:equa1}) for $\eta_i$ we have 
\[
\eta_1=\partial/\partial\theta
\]
\[
\eta_2=\frac{cos\theta}{k_1\cos^2\theta+k_2\sin^2\theta}e_1+\frac{\sin\theta}{k_1\cos^2\theta+k_2\sin^2\theta}e_2-
\frac{v_1\cos\theta+v_2\sin\theta}{k_1\cos^2\theta+k_2\sin^2\theta}\partial/\partial\theta
\]
\[
\eta_3=\frac{k_2\sin\theta}{k_1\cos^2\theta+k_2\sin^2\theta}e_1-\frac{k_1\cos\theta}{k_1\cos^2\theta+k_2\sin^2\theta}e_2-
\frac{v_1k_2\sin\theta-v_2k_1\cos\theta}{k_1\cos^2\theta+k_2\sin^2\theta}\partial/\partial\theta.
\]

Thus, $\{\eta_1,\eta_2,\eta_3\}$ are tangential vector fields and let $\eta^{i}$ be the dual orthonormal frame. Then 
$\eta^i\eta_j=\delta^i_j$ and $\eta^{i}\in T^{\ast}(\mathcal{H}(S))$.

Observe that $\Pi_+$ is generated by the vectors $\eta_1,\eta_2$, and thus $\eta^3(\Pi_+)=0$.

If $(e^1,e^2,d\theta)$ is the dual frame of $(e_1,e_2,\partial/\partial\theta)$ we have
\[
\eta^3=\sin\theta\, e^1-\cos\theta\, e^2.
\]
Hence,
\begin{equation}\label{e:eta3anaposo}
\eta^3\wedge d\eta^3=e^1\wedge e^2\wedge d\theta.
\end{equation}
which implies that $\eta^3\wedge d\eta^3\neq 0$ and thus $(\mathcal{H}(S),\Pi_+)$ is a contact manifold.

The $\beta$-plane $\Pi_-$ is generated by the vectors $\eta_1,\eta_3$, and thus 
$\eta^2(\Pi_-)=0$. A brief computation gives
\begin{equation}\label{e:eta2anaposo}
\eta^2=k_1\cos\theta\, e^1+k_2\sin\theta\, e^2,
\end{equation}
and then,
\[
\eta^2\wedge d\eta^2=-k_1k_2\, e^1\wedge e^2\wedge d\theta.
\]
Using the fact that $S$ is convex, it follows that $\eta^2\wedge d\eta^2\neq 0$ and thus $(\mathcal{H}(S),\Pi_-)$ is a contact manifold.
\end{proof}

\vspace{0.1in}

For any smoothly embedded convex surface of $S\subset {\mathbb S}^3_{\epsilon}$ consider the constant angle hypersurface 
${\mathcal H}_a(S)$ of ${\mathbb L}({\mathbb S}^3_{\epsilon})$ which is the set of all oriented geodesics passing through $S$ and  
making an angle $a$ with the normal vector field $N$ of $S$. As in the flat case, ${\mathcal H}_0(S)$ is a Lagrangian surface in 
${\mathbb L}({\mathbb S}^3_{\epsilon})$, while ${\mathcal H}_{\pi/2}(S)$ is the tangent hypersurface. The following Proposition covers the other cases:
\begin{Prop}
For $a\in (0,\pi/2)$, the hypersurface ${\mathcal H}_a(S)$ is null exactly 
\begin{enumerate}
\item at the oriented geodesics passing through an umbilic point on $S$ and,
\item at the oriented geodesics whose directions projected to the tangent bundle $TS$ are tangent to the lines of curvature of $S$.
\end{enumerate} 
\end{Prop}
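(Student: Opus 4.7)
The plan is to mimic the proof of the corresponding flat proposition by parametrising $\mathcal{H}_a(S)$ and checking where the induced metric degenerates. The natural extension of the tangent-case immersion $\overline\phi$ is
\[
\overline\phi_a:\overline S\times S^1\to{\mathbb L}({\mathbb S}_\epsilon^3),\qquad (x,\theta)\mapsto\phi(x)\wedge y(x,\theta),
\]
with $y(x,\theta)=\cos a\,N(x)+\sin a\,v(x,\theta)$. Since $\langle y,y\rangle_\epsilon=\epsilon$ this lands in ${\mathbb L}({\mathbb S}_\epsilon^3)$, reproduces $\overline\phi$ at $a=\pi/2$, and degenerates to the Lagrangian parametrisation of normal geodesics at $a=0$. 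The hypersurface ${\mathcal H}_a(S)$ is null at $(x,\theta)$ precisely when the $3\times 3$ Gram matrix of $\overline\phi_a^{\ast}{\mathbb G}_\epsilon$ is singular, so the whole proof reduces to computing and factoring a single determinant.

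First I would compute the three tangent bivectors $d\overline\phi_a(e_1)$, $d\overline\phi_a(e_2)$, $d\overline\phi_a(\partial/\partial\theta)$ by applying the Gauss--Weingarten formulas for $S\subset{\mathbb S}_\epsilon^3\subset{\mathbb R}_\epsilon^4$, together with $\partial v/\partial\theta=v^\bot$ and $\nabla_{e_i}v=v_i v^\bot$. Because $(e_1,e_2)$ diagonalises the shape operator, each covariant derivative of $y$ splits cleanly in the orthonormal ambient frame $(\phi,e_1,e_2,N)$, and the three tangent bivectors land in the six-dimensional subspace of $\Lambda^2({\mathbb R}^4)$ spanned by the pairwise wedges of these four vectors. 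Next, I would assemble the Gram matrix using ${\mathbb G}_\epsilon=-\iota^{\ast}\langle\langle\cdot,{\mathbb J}\circ{\mathbb J}'\cdot\rangle\rangle_\epsilon$; the tangent-case computation that produced equation (\ref{e:equa1}) serves both as a template and as a sanity check, since at $a=\pi/2$ the determinant must vanish identically because $d\overline\phi(\partial/\partial\theta)$ is null and normal to the image.

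For $a\in(0,\pi/2)$ I expect the determinant to factor as an everywhere-nonzero function of $(a,k_1,k_2)$ times
\[
(k_1-k_2)\sin\theta\cos\theta,
\]
which is exactly the non-flat counterpart of the factor $\sigma_0 e^{2iA}-\bar\sigma_0 e^{-2iA}$ in the flat proposition. Vanishing of $k_1-k_2$ gives $k_1(x)=k_2(x)$, i.e.\ an umbilic point, while vanishing of $\sin\theta\cos\theta$ forces $v(x,\theta)\in\{\pm e_1,\pm e_2\}$, so the tangential projection of the geodesic direction is a principal direction of $S$; the two cases of the statement follow. The main obstacle is keeping the bivector bookkeeping tractable: unlike the tangent case, $d\overline\phi_a(e_i)$ now carries extra $e_i\wedge N$ and $\phi\wedge e_j$ cross terms coming from $\nabla_{e_i}N=-k_ie_i$ and the $v^\bot$-component of $\nabla_{e_i}v$. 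To avoid a combinatorial explosion I would use that ${\mathbb J}$ interchanges $\phi$ and $y$ (up to sign) and that ${\mathbb J}'$ preserves $(\phi\wedge y)_\epsilon^\bot$ to kill a large number of pairings a priori before expanding the determinant.
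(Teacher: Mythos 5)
Your plan is correct and its predicted outcome matches the paper, but your degeneracy test is not the one the paper uses here. The paper parametrises ${\mathcal H}_a(S)$ by exactly your immersion, $\bar\phi_a(x,\theta)=\phi(x)\wedge v_a(x,\theta)$ with $v_a=(\cos\theta\,e_1+\sin\theta\,e_2)\sin a+N\cos a$, but then, instead of pulling back ${\mathbb G}_\epsilon$ and factoring a $3\times3$ determinant (which is what it does in the \emph{flat} case and what you propose to transcribe), it exhibits an explicit normal bivector field $\overline N_a$ of ${\mathcal H}_a(S)$ and computes the single inner product ${\mathbb G}(\overline N_a,\overline N_a)=(k_2-k_1)\cos^2 a\,\sin 2\theta$; since a hypersurface of a neutral $4$-manifold is null at a point iff its normal is null there iff the induced metric is degenerate there, this gives the statement at once, because $\cos^2a\neq0$ for $a\in(0,\pi/2)$ and $\sin2\theta=0$ exactly when the tangential projection of the geodesic direction is principal. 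So your route is legitimate and equivalent, and your expected factor $(k_1-k_2)\sin\theta\cos\theta$ is precisely the paper's, appearing to the first power as needed for the ``exactly''; what each approach buys is clear: yours is mechanical and uniform with the flat case but costs six bivector inner products and a determinant, while the paper's costs only one inner product, with the real work hidden in producing $\overline N_a$. Two cautions if you execute your version: the nonvanishing cofactor will generally not depend on $(a,k_1,k_2)$ alone --- by analogy with the flat case, where it is $\psi_0^2-\sigma_0\bar\sigma_0=4r_1r_2$ up to coordinate factors, expect dependence on $\theta$ (e.g.\ through $k_1\cos^2\theta+k_2\sin^2\theta$) and possibly on the connection terms $v_i$ --- and its nonvanishing is exactly where the convexity hypothesis $k_1k_2>0$ must be invoked, so that step cannot be waved through as a property of ``$(a,k_1,k_2)$'' in the abstract.
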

\begin{proof}
Let $\phi$ be an immersion of $S$ in ${\mathbb S}^3_{\epsilon}$ and consider, as before, the oriented orthonormal frame $(\phi,e_1,e_2,N)$, where 
$(e_1,e_2)$ are the principal directions.

For $a\in (0,\pi/2)$, the hypersurface ${\mathcal H}_a(S)$ is given by the image of the immersion 
\[
\bar\phi_a(x,\theta)=\phi(x)\wedge v_a(x,\theta),
\] 
where, $x\in S$ and $\theta\in [0,2\pi)$ with
\[
v_a(x,\theta)=(\cos\theta\, e_1(x)+\sin\theta\,e_2(x))\sin a+N\cos a.
\]
Consider the following normal vector field of ${\mathcal H}_a(S)$:
\[
\overline{N}_a=-\Big(\cos\theta\left<\overline\nabla_{e_1}v_a,\xi_2\right>_{\epsilon}
+\sin\theta\left<\overline\nabla_{e_2}v_a,\xi_2\right>_{\epsilon}\Big)\,\phi\wedge\xi_1+\cos a\, v_a\wedge\xi_1
\]
\[
\qquad\qquad\qquad\qquad\qquad\qquad\qquad-\cos a\Big(\sin\theta\left<\overline\nabla_{e_1}v_a,\xi_2\right>_{\epsilon}
-\cos\theta\left<\overline\nabla_{v_a}v_a,\xi_2\right>_{\epsilon}\Big)\,\phi\wedge\xi_2,
\]
where $\overline\nabla$ denotes the Levi-Civita connection of $g_{\epsilon}$. Then,
\[
{\mathbb G}(\overline{N}_a,\overline{N}_a)=(k_2-k_1)\cos^2a\sin2\theta,
\]
and the Proposition follows.
\end{proof}

\vspace{0.1in}

Let $S$ be a smooth convex 2-sphere in ${\mathbb S}_{\epsilon}^3$ and let ${\mathcal C}:I\rightarrow \mathcal{H}(S):u\mapsto \phi(u)\wedge v(u)$ 
be an $\alpha$-Legendrian curve, where the curve $\phi(u)$ in $S$ is parametrised by the arclength $u$. 

By definition we have $\dot{{\mathcal C}}=\dot\phi\wedge v+\phi\wedge \dot{v}\in \mbox{span}\{\phi\wedge v^{\bot}, \phi\wedge N\}$, and thus there 
exist two real functions $\lambda_1$ and $\lambda_2$ such that 
\[
\dot\phi\wedge v+\phi\wedge \dot{v}=\lambda_1\phi\wedge v^{\bot}+\lambda_2\phi\wedge N.
\]
We have
\begin{equation}\label{e:equat11}
\dot\phi\wedge v+\phi\wedge(\dot{v}-\lambda_1 v^{\bot}-\lambda_2 N)=0.
\end{equation}
If $\dot\phi=a_1v+a_2\phi+a_3 v^{\bot}+a_4 N$, it is obvious that $a_3=a_4=0$. Then $\dot\phi=a_1v+a_2\phi$ and since 
$\left<\phi,\dot\phi\right>_{\epsilon}=0$ we have that $a_2=0$. Then $\dot\phi=a_1v$ and since $|\dot\phi|_{\epsilon}^2=\epsilon$ we have that 
either $a_1=1$ or $a_1=-1$. In any case, $v=a_1\dot\phi$ and thus,
\[
{\mathcal C}=\phi\wedge v=\phi\wedge a_1\dot\phi,
\]
where $a_1^2=1$.

We turn now to the proof of the second part of Theorem \ref{t2} in the non-flat case in 3 steps.

\newpage

\noindent{\bf (i) and (ii) imply (iii)}:

The fact that ${\mathcal C}$ is $\beta$-Legendrian implies that there exist functions $a,b$ along the curve $\phi$ such that,
\begin{equation}\label{e:eqcdot}
\dot{{\mathcal C}}=\dot\phi\wedge v+\phi\wedge\dot{v}=a\phi\wedge v^{\perp}+bv\wedge v^{\perp},
\end{equation}
and since ${\mathcal C}=\phi\wedge v$ is normal to the curve $\phi$ we have that 
\[
\left<\dot\phi, v\right>_{\epsilon}=0.
\]
Since $N$ is the unit normal vector field of $S$ $\left<\dot\phi, N\right>_{\epsilon}=0$, and therefore
$\dot\phi=\pm v^{\perp}$.
Now, (\ref{e:eqcdot}) yields $\phi\wedge (\dot{v}-av^{\perp})=0$, which implies $\dot{v}=\mu\phi+av^{\perp}$.
Then 
\begin{equation}\label{e:eqcdotN}
\left<\dot{v}, N\right>_{\epsilon}=0,
\end{equation}
and since $$0=\left<\dot{N},\phi\right>_{\epsilon}=-\left<\dot{\phi},N\right>_{\epsilon}=\left<v^{\perp},N\right>_{\epsilon},$$
we have that $\dot{N}=\lambda v^{\perp}=\lambda\dot\phi$ and therefore $\phi$ is a line of curvature.

On the other hand,
\begin{align}
0=\left<\dot{v},N\right>_{\epsilon}&= -\left<v,\nabla_{v^{\bot}}N\right>_{\epsilon}
= \left<\cos\theta e_1+\sin\theta e_2, A(-\sin\theta e_1+\cos\theta e_2)\right>_{\epsilon}\nonumber\\
&=\left<\cos\theta e_1+\sin\theta e_2, -\sin\theta Ae_1+\cos\theta Ae_2\right>_{\epsilon}= \epsilon\,(k_2-k_1)\cos\theta\sin\theta,\nonumber
\end{align}
which shows that $S$ is umbilic along the curve $\phi$.

\vspace{0.1in}

\noindent{\bf (i) and (iii) imply (ii)}:

The fact that ${\mathcal C}$ is $\beta$-Legendrian gives (\ref{e:eqcdot}). Suppose that the curve $\phi$ is also a line of curvature. Then 
$\dot{N}=\lambda\dot\phi$, where $\lambda$ is a non zero function along the curve. We also have,
\[
\dot\phi=a_1v+a_2\bot v^{\bot}.
\]
From (\ref{e:eqcdot}) we have
\[
a_2v^{\perp}\wedge v+\phi\wedge\dot{v}=a\phi\wedge v^{\perp}+b v\wedge v^{\perp},
\]
which gives $\phi\wedge (\dot{v}-av^{\perp})=0$, and thus $\dot{v}=av^{\perp}+\mu\phi$.
Then, $\left<\dot{v}, N\right>_{\epsilon}=0$, which yields,
\[
0=\left<\dot{v}, N\right>_{\epsilon}=-\left<v,\dot{N}\right>_{\epsilon}=-\lambda\left<v,\dot{\phi}\right>_{\epsilon}.
\]
It follows that $\left<\dot\phi, v\right>_{\epsilon}=0$ and hence ${\mathcal C}=\phi\wedge v$ is normal to the curve $\phi$.

Suppose now that $S$ is umbilic along the curve $\phi=\phi(u)$, i.e., $k_1=k_2$. The relation (\ref{e:eqcdot}) implies
\[
(\dot\phi+bv^{\perp})\wedge v+\phi\wedge (\dot{v}-av^{\perp})=0,
\]
which gives the following equations:
\[
\dot\phi=-bv^{\perp}+\mu v \qquad{\mbox and}\qquad \dot{v}=av^{\perp}+s\phi.
\]
Then $\left<\dot{v}, N\right>_{\epsilon}=0$ and hence,
\begin{align}
0&=\left<v,\dot{N}\right>_{\epsilon}=\left<v,\nabla_{\dot{\phi}}N\right>_{\epsilon}=\left<v,\nabla_{-bv^{\perp}+\mu v}N\right>_{\epsilon}\nonumber\\
&=-b\left<v,\nabla_{v^{\perp}}N\right>_{\epsilon}+\mu\left<v,\nabla_{v}N\right>_{\epsilon}.\nonumber
\end{align}
The fact that $S$ is umbilic along the curve, implies that $\left<v,\nabla_{v^{\perp}}N\right>_{\epsilon}=0$. Then
\[
0=\mu\left<v,\nabla_{v}N\right>_{\epsilon}=-\epsilon\mu(k_1\cos^2\theta+k_2\sin^2\theta),
\]
and since $S$ is convex, we have that $k_1\cos^2\theta+k_2\sin^2\theta\neq 0$. It follows that $\mu=0$ and thus $\dot\phi=-b v^{\perp}$.
Therefore, $\left<\dot\phi, v\right>_{\epsilon}=0$ and hence ${\mathcal C}=\phi\wedge v$ is normal to the curve $\phi=\phi(u)$.

\vspace{0.1in}

\noindent{\bf (ii) and (iii) imply (i)}:

The fact that ${\mathcal C}$ is normal to $\phi=\phi(u)$ implies that $\left<\dot\phi,v\right>_{\epsilon}=0$. Suppose that the curve 
$\phi$ is a line of curvature. Then $\dot{N}=\lambda\dot\phi$, where $\lambda$ is a non zero function along the curve. We also have that,
\begin{equation}\label{e:phidot1}
\dot\phi=a_1v+b_1 v^{\perp}.
\end{equation}
Since $$\left<\dot{v},N\right>_{\epsilon}=-\left<\dot{N},v\right>_{\epsilon}=-\lambda\left<\dot{\phi},v\right>_{\epsilon}=0,$$
we obtain
\begin{equation}\label{e:phiv1}
\dot{v}=a_2\phi+b_2 v^{\perp}.
\end{equation}
Using (\ref{e:phidot1}) and (\ref{e:phiv1}) we have:
\begin{align}
\dot{{\mathcal C}}&=\dot\phi\wedge v+\phi\wedge\dot{v}=(a_1v+b_1 v^{\perp})\wedge v+\phi\wedge(a_2\phi+b_2 v^{\perp})\nonumber\\
&=-b_1 v\wedge v^{\perp}+b_2 \phi\wedge v^{\perp}\in \Pi_-,\nonumber
\end{align}
and thus ${\mathcal C}$ is $\beta$-Legendrian.

\vspace{0.1in}

 Suppose that $S$ is umbilic along the curve $\phi$ and that ${\mathcal C}$ is normal to $\phi=\phi(u)$. Then $\left<\dot\phi,v\right>_{\epsilon}=0$ and 
hence the equation (\ref{e:phidot1}) becomes $\dot\phi= b_1v^{\perp}$.
It follows that $\left<\dot{v},\phi\right>_{\epsilon}=-\left<\dot{\phi},v\right>_{\epsilon}=0$ and
\begin{align}
\left<\dot{v},N\right>_{\epsilon}&=-\left<v,\dot{N}\right>_{\epsilon}=-\left<v,\nabla_{\dot\phi}N\right>_{\epsilon}\nonumber\\
&=-\left<v,\nabla_{v^{\perp}}N\right>_{\epsilon}=\epsilon\, (k_1-k_2)\cos\theta\sin\theta=0.\nonumber
\end{align}
Thus,
\[
\dot\phi=b_1v^{\perp} \qquad{\mbox and}\qquad \dot{v}=b_2v^{\perp}.
\]
Therefore, 
\[
\dot{{\mathcal C}}=\dot\phi\wedge v+\phi\wedge\dot{v}=-b_1 v\wedge v^{\perp}+b_2\phi\wedge  v^{\perp}\in \Pi_-,
\]
which shows again that ${\mathcal C}$ is $\beta$-Legendrian.

We prove the final part of Theorem \ref{t2} for the case of ${\mathbb L}({\mathbb S}^3)$ while, the proof for the case of ${\mathbb L}({\mathbb H}^3)$ is similar. Consider the contact 1-form $\eta^3$ of the contact manifold $({\mathcal H}(S),\Pi_+)$ given in equation (\ref{e:eta3anaposo}). 

A brief computation shows that the Reeb vector field $X$ associated with $\eta^3$ is 
\[
X=\left<v,\nabla_{v^{\bot}}v^{\bot}\right>\phi\wedge v^{\bot}+(k_1-k_2)\cos\theta\sin\theta\,\phi\wedge N+v\wedge v^{\bot}.
\]

Let $C(t)=\phi(t)\wedge v(t)$ be a smooth regular curve in $\mathcal{H}(S)$, where $t$ is the arclength of the contact curve $\phi=\phi(t)$ and for every $t$, the velocity $\dot{C}(t)$ is a Reeb vector. It then follows,
\[
\dot\phi\wedge v+\phi\wedge\dot{v}=\left<v,\nabla_{v^{\bot}}v^{\bot}\right>\phi\wedge v^{\bot}+(k_1-k_2)\cos\theta\sin\theta\,\phi\wedge N+v\wedge v^{\bot},
\]
which yields,
\begin{equation}\label{e:flowequat}
\dot\phi=-v^{\bot}\qquad\qquad \dot{v}=\left<v,\nabla_{v^{\bot}}v^{\bot}\right>v^{\bot}+(k_1-k_2)\cos\theta\sin\theta\, N,
\end{equation}
Thus, 
\[
\left<\dot\phi,v\right>=-\left<v^{\bot},v\right>=0.
\]
Therefore, the curve $C(t)$ is formed by the oriented geodesics that are orthogonal to the contact curve $\phi$ and therefore we have proved the first statement.

Using that $\left<\dot\phi,\dot\phi\right>=1$, we have
\begin{equation}\label{e:geodesicequat}
\ddot{\phi}=-\dot{v}^{\bot}=-\phi+\left<v,\nabla_{v^{\bot}}v^{\bot}\right>v+(k_1\sin^2\theta+k_2\cos^2\theta)\,N.
\end{equation}
Denoting the vector fields $d\phi(\partial/\partial t),\,d\phi(\partial/\partial\theta)$ by $\partial/\partial t,\,\partial/\partial\theta$, respectively and using (\ref{e:flowequat}), we have
\begin{equation}\label{e:dfff}
\nabla_{\partial/\partial\theta}\nabla_{v^{\bot}}=-\nabla_{\partial/\partial\theta}\nabla_{\partial/\partial t}=-\nabla_{\partial/\partial t}\nabla_{\partial/\partial\theta}=\nabla_{v^{\bot}}\nabla_{\partial/\partial\theta}.
\end{equation}
Note that
\[
\left<v,\nabla_{e_1}v^{\bot}\right>=\left<e_1,\nabla_{e_1}e_2\right>\qquad 
\left<v,\nabla_{e_2}v^{\bot}\right>=\left<e_2,\nabla_{e_2}e_1\right>,
\]
and thus for $i=1,2$,
\[
(\partial/\partial\theta)\left<v,\nabla_{e_1}v^{\bot}\right>=0,
\qquad
(\partial/\partial\theta)\left<v,\nabla_{e_2}v^{\bot}\right>=0.
\]
We then have,
\begin{eqnarray}
-\left<v,\nabla_v v^{\bot}\right>&=&-\cos\theta\left<v,\nabla_{e_1}v^{\bot}\right>-\sin\theta\left<v,\nabla_{e_2}v^{\bot}\right>\nonumber\\
&=& (\partial/\partial\theta)\left(-\sin\theta\left<v,\nabla_{e_1}v^{\bot}\right>+\cos\theta\left<v,\nabla_{e_2}v^{\bot}\right>\right)\nonumber\\
&=&\left< \partial v/\partial\theta,\nabla_{v^{\bot}} v^{\bot}\right>+\left<v, \nabla_{\partial/\partial\theta}\nabla_{v^{\bot}} v^{\bot}\right>,\nonumber 
\end{eqnarray}
and using (\ref{e:dfff}) we get
\begin{eqnarray}
\left<v,\nabla_v v^{\bot}\right>&=&-\left<v^{\bot},\nabla_{v^{\bot}} v^{\bot}\right>-\left<v, \nabla_{v^{\bot}}\nabla_{\partial/\partial\theta} v^{\bot}\right>\nonumber \\
&=&\left<v, \nabla_{v^{\bot}}v\right>,\nonumber \\
&=&0\label{e:dff}
\end{eqnarray}
Using (\ref{e:dff}), along the contact curve $\phi$, we have:
\[
\left<e_1,\nabla_{e_1}e_2\right>=\left<e_2,\nabla_{e_2}e_1\right>=0,
\]
and therefore, 
\begin{eqnarray}
\left<v,\nabla_{v^{\bot}}v^{\bot}\right>&=&-\cos\theta\left<e_1,\nabla_{e_1}e_2\right>-\sin\theta\left<e_2,\nabla_{e_2}e_1\right>\nonumber\\
&=&0\label{e:importantzero}
\end{eqnarray}
Substituting (\ref{e:importantzero}) into (\ref{e:geodesicequat}) we get
\[
\ddot{\phi}=-\dot{v}^{\bot}=-\phi+(k_1\sin^2\theta+k_2\cos^2\theta)\,N.
\]
Hence $\ddot{\phi}$ lies in the plane $\phi\wedge N$ and thus $\nabla_{\dot{\phi}}\dot{\phi}=0$. Thus the Reeb vector field is the oriented lines
tangent to $S$ that are orthogonal to a geodesic.

This completes the proof of Theorem \ref{t2}. $\;\qquad\Box$

\section{Intersection Tori of Null Hypersurfaces}\label{s4}

Given a smooth convex surface $S\subset {\mathbb R}^3$, the set of oriented outward-pointing normal geodesics forms a surface $\Sigma$ in 
${\mathbb L}({\mathbb R}^3)$
which is Lagrangian and totally real away from umbilic points on $S$ \cite{gag1} \cite{gk1}. 

A normal neighbourhood of $\Sigma$ can be constructed by considering the set
\[
{\mathcal N}_a(\Sigma)=\{\gamma\in{\mathbb L}({\mathbb R}^3)\;|\;\exists\gamma_0\in \Sigma{\mbox{ s.t. }}
\gamma\cap\gamma_0=p\in S{\mbox{ and }} \dot{\gamma}\cdot\dot{\gamma}_0\geq \cos a\;\},
\]
for $a\in[0,\pi/2)$. It is not hard to see that ${\mathcal N}_0(\Sigma)=\Sigma$, while for $a>0$ the 4-manifold ${\mathcal N}_a(\Sigma)$ 
is a disc bundle over $\Sigma$ which is a normal neighbourhood of $\Sigma$ in ${\mathbb L}({\mathbb R}^3)$. 
Moreover the boundary of the normal neighbourhood are the constant angle hypersurfaces introduced in Section \ref{s3}: 
$\partial{\mathcal N}_a(\Sigma)={\mathcal H}_a(S)$. 

Thus for $a=\pi/2$, the null hypersurface ${\mathcal H}_{\pi/2}(S)={\mathcal H}(S)$ that we have been studying are the boundaries of 
the normal neighbourhood of Lagrangian surfaces in ${\mathbb L}({\mathbb R}^3)$.

Consider as a local geometric model, a pair of Lagrangian discs intersecting at an isolated point, given by the
oriented outward-pointing normal lines to two convex spheres $S_1$ and $S_2$, viewed as surfaces  $\Sigma_1$ and $\Sigma_2$ in 
${\mathbb L}({\mathbb R}^3)$.

These intersect in two points $\Sigma_1\cap\Sigma_2=\{\gamma_1,\gamma_2\}$, which when viewed in ${\mathbb R}^3$ are the pair of oriented lines that
attain the following min/max quantities of the two-point distance function:
\[
\min_{p_1\in S_1}\max_{p_2\in S_2}d(p_1,p_2)
\qquad{\mbox{and}}\qquad
\max_{p_1\in S_1}\min_{p_2\in S_2}d(p_1,p_2).
\]

To localize this argument, choose $\gamma_1$ say and consider only discs $D_1\subset S_1$ and $D_2\subset S_2$ about the associated points. Let 
$\Sigma_1$ and $\Sigma_2$ be the oriented normal lines to these discs so that $\Sigma_1\cap\Sigma_2=\{\gamma_1\}$.

About each disc the boundary of a normal neighbourhood as constructed above is ${\mathcal H}(D_j)$ and the intersection 
${\mathcal H}(D_1)\cap{\mathcal H}(D_2)$ is the disjoint union of two tori - each common tangent line to $D_1$ and $D_2$ has two orientations.

For simplicity, let $S$ be a round sphere of radius $r_0$ and centre the origin $(0,0,0)$ in ${\mathbb R}^3$. 
The set of oriented lines normal to $S$ is equal to the set of oriented lines passing through the origin. This is an embedded holomorphic Lagrangian 
sphere $\Sigma\equiv S^2$ given by the zero section of $T{\mathbb S}^2$. In local coordinates this is $\xi\mapsto(\xi,\eta=0)$.

On the other hand, the set of oriented lines tangent to $S$ can be characterized as those oriented lines whose perpendicular distance from the origin is  
$r_0$.  The perpendicular distance to the origin of an oriented line $(\xi,\eta)$ is given by:
\[
\chi=\frac{2|\eta|}{1+\xi\bar{\xi}}.
\]

The hypersurface ${\mathcal H}(S)$ is given locally by 
\[
\xi=\frac{\nu+e^{iA}}{1-\bar{\nu}e^{iA}}
\qquad\qquad
\eta={\textstyle{\frac{1}{2}}}(1+\xi\bar{\xi})r_0e^{iA}=-\frac{(1+\nu\bar{\nu})}{(1-\bar{\nu}e^{iA})^2}r_0e^{iA},
\]
for $(\nu,A)\in{\mathbb C}\times S^1$.

Note that passing to the constant angle hypersurface ${\mathcal H}_a(S)$ here does not change the picture, as the constant angle hypersurface of 
a round sphere with a given radius is the tangent hypersurface of a round sphere with a different radius.

\newpage

\noindent {\bf Proof of Theorem \ref{t3}}:
 
Consider the intersection of the two such tangent hypersurfaces ${\mathcal H}(S_1)$ and 
${\mathcal H}(S_2)$, where $S_1$ and $S_2$ are spheres of radii $r_1\geq r_2$, respectively, whose centres are separated  
by a distance $l$. By a translation and a rotation move the centre of the larger sphere to the origin and the centre of the smaller 
sphere to the positive $x^3$ axis. The Lagrangian sections $\Sigma_1$ and $\Sigma_2$ then intersect at the oriented line along the
$x^3$-axis

The hypersurface ${\mathcal H}(S_1)$ is given by
\[
\frac{2|\eta|}{1+\xi\bar{\xi}}=r_1,
\]
while we can translate ${\mathcal H}(S_2)$ by $l$ to move the centre to the origin which yields a change $\eta\rightarrow\eta+l\xi$ and then it 
is given by
\[
\frac{2|\eta+l\xi|}{1+\xi\bar{\xi}}=r_2.
\]
These are the two equations we must solve to find the intersection.

The first equation is readily solved in polar coordinates 
\[
\xi=Re^{i\theta}
\qquad\qquad 
\eta={\textstyle{\frac{1}{2}}}(1+R^2)r_1e^{i\psi},
\]
for $R\in{\mathbb R}_+$, $\theta\in [0,2\pi)$ and $\psi=\psi(R,\theta)$.

Substituting this into the second equation yields
\[
l^2\frac{R^2}{(1+R^2)^2}+lr_1\cos(\psi-\theta)\frac{R}{1+R^2}+\frac{1}{4}(r_1^2-r_2^2)=0.
\]
The set of solutions to this equation depends upon the relative values of $l$, $r_1$ and $r_2$.  Switching to spherical polar coordinates
by the substitution $R=\tan(\phi/2)$, we can write this as a quadratic equation for $e^{i\psi}$ thus:
\[
lr_1\sin\phi\; e^{2i(\psi-\theta)}+\left(l^2\sin^2\phi+r_1^2-r_2^2\right)e^{i(\psi-\theta)}+lr_1\sin\phi=0.
\]
A solution of this only exists if the discriminant is negative, so that
\[
e^{i\psi}=\left(-K\pm\sqrt{1-K^2}\;i\right)e^{i\theta},
\]
where $K(\phi)$ is the function
\[
K=\frac{r_1^2-r_2^2+l^2\sin^2\phi}{2lr_1\sin\phi}.
\]
Clearly we must have $K^2\leq 1$ for there to be a solution, which implies that
\begin{equation}\label{e:ineq1}
r_1-r_2\leq l\sin\phi\leq r_1+r_2.
\end{equation}

Thus, for a solution to exist we must have $l\geq r_1-r_2$, i.e. one sphere cannot lie completely inside the other sphere. When equality holds, 
the surfaces $S_1$ and $S_2$
intersect at a single point $p$, and the solution set is a circle (parameterized by $\theta$) with $\phi=\pi/2$. This is the circle of oriented lines in 
the common tangent plane $T_pS_1=T_pS_2$ which forms a null curve in ${\mathbb L}({\mathbb R}^3)$.

On the other hand, the surfaces $S_1$ and $S_2$ intersect for  $r_1-r_2<l\leq r_1+r_2$ and the tangent lines (with both orientations) to the intersection
circle are contained in both ${\mathcal H}(S_1)$ and ${\mathcal H}(S_2)$. In fact, ${\mathcal H}(S_1)\cap{\mathcal H}(S_2)=T^2$ in this range and
the intersection set is connected.

Finally, if  $l> r_1+r_2$, the intersection set has two connected components, both tori, which are related by flipping the orientation of the 
common tangent lines.

Let us now compute the induced metric on the solution set when $l>r_1-r_2$, i.e. the intersection tori. The torus is given by
local sections in polar coordinates 
\[
\xi=Re^{i\theta}
\qquad\qquad
\eta={\textstyle{\frac{1}{2}}}(1+R^2)r_1\left(-K\pm\sqrt{1-K^2}\;i\right)e^{i\theta},
\] 
with
\[
K=\frac{(r_1^2-r_2^2)(1+R^2)^2+4l^2R^2}{4lr_1R(1+R^2)}.
\]
This surface has a complex point iff at the point \cite{gk1}
\[
\sigma=-\partial_\xi\bar{\eta}=-{\textstyle{\frac{1}{2}}}e^{-i\theta}\left(\partial_R-\frac{i}{R}\partial_\theta\right)\bar{\eta}=0.
\]
For the torus, a computation shows that 
\[
|\sigma|^2=\frac{r_1^2r_2^2l^2\cos^2\phi}{\left(l^2\sin^2\phi -(r_1-r_2)^2\right)\left((r_1+r_2)^2-l^2\sin^2\phi\right)},
\]
On the other hand, the pullback of the symplectic form (\ref{e:symplectic}) to a section is
\[
\Omega|_\Sigma=\lambda\frac{2id\xi\wedge d\bar{\xi}}{(1+\xi\bar{\xi})^2}
={\mathbb I}{\mbox{m}}\left[\partial_\xi\left(\frac{\eta}{(1+\xi\bar{\xi})^{2}}\right)\right]2id\xi\wedge d\bar{\xi};
\] 
which in our case is
\[
\lambda=-\frac{l(l^2\sin^2\phi -r_1^2-r_2^2)\cos\phi}
  {2[\left(l^2\sin^2\phi -(r_1-r_2)^2\right)\left((r_1+r_2)^2-l^2\sin^2\phi\right)]^{\scriptstyle\frac{1}{2}}}.
\]
Thus the determinant of the metric induced on $T^2$ by the neutral metric (\ref{e:metric}) is \cite{gk1} 
\[
{\mbox{det }}{\mathbb G}|_{T^2}=\lambda^2-|\sigma|^2=-{\textstyle{\frac{1}{4}}}l^2\cos^2\phi,
\] 
If $r_1-r_2<l<r_1+r_2$ then the surfaces $S_1$ and $S_2$ intersect on a circle and the tangent lines to this circle are common to 
${\mathcal H}(S_1)$ an ${\mathcal H}(S_1)$. These lines are horizontal, so $\phi=\pi/2$ along them. Thus, when the surfaces intersect, the
tangent hypersurfaces intersect along a torus that is Lorenz and totally real, except at a  curve of complex points where the induced metric is 
degenerate.

If $l>r_1+r_2$ then the surfaces $S_1$ and $S_2$ do not intersect, $\phi>\pi/2$, and so the tangent hypersurfaces intersect along a pair of
tori (opposite orientations on the same line) that are totally real and Lorentz.

This completes the proof of Theorem \ref{t3}.
\vspace{0.1in}

\section{Neutral Causal Topology}\label{s5}

This section contains a discussion of the preceding constructions with a view to explaining the motivation behind them, to put them in a broader context
and to indicate their possible applications to 4-manifold topology. 

Neutral metrics offer us geometric tools that are
sensitive to the underlying topology, at the level of the metric, rather than its curvature. 
This can be seen from a geometric, analytic and topological perspective. While the individual scenarios
are classical in some sense, it is their concatenation that is of particular interest. 

As the discussion necessitates spanning a number of areas, the bibliography will be selective rather than exhaustive. Further aspects of neutral metrics
which we do not discuss can be found foe example in \cite{BaGaGaV} \cite{DaGaM} \cite{law1} \cite{law3} and references therein. 

A fundamental observation is that, point-wise, the null cone of a neutral metric is a cone over a torus. Since the cross-section is not simply 
connected, under the right circumstances, it is possible to encode topological information in the null cone of a neutral metric.  

Put another way, 
the metric must fit with the underlying 4-manifold topology and so, for example, there are obstructions to their existence. 
For compact smooth 4-manifolds the matter is clarified by the following theorem, which uses Hirzebruch and Hopf's 1950's work on plane fields
\cite{HaH}:

\vspace{0.1in}
\begin{Thm}\cite{kam02} \cite{Mats91}
Let ${\mathbb N}^4$ be a smooth compact 4-manifold admitting a neutral metric. Then
\[
\chi({\mathbb N}^4)+\tau({\mathbb N}^4)=0{\mbox{ mod }}4
\qquad{\mbox and }\qquad
\chi({\mathbb N}^4)-\tau({\mathbb N}^4)=0{\mbox{ mod }}4,
\]
where $\chi({\mathbb N}^4)$ is the Euler number and $\tau({\mathbb N}^4)$ the Hirzebruch signature of ${\mathbb N}^4$.

Moreover, if ${\mathbb N}^4$ is simply connected, these conditions are sufficient for the existence of a neutral metric.
\end{Thm}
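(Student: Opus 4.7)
The plan is to translate the existence of a neutral metric into a purely topological question about splittings of $TN^4$, and then analyse the resulting characteristic class identities using van der Blij's lemma on the intersection form.

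First I would note that a neutral metric is a reduction of the structure group of $TN^4$ from $GL(4,\mathbb{R})$ to $O(2,2)$, and that $O(2,2)$ deformation retracts onto its maximal compact subgroup $O(2)\times O(2)$. Up to homotopy this reduction is the same datum as an oriented orthogonal splitting $TN^4=E_+\oplus E_-$ into rank-$2$ subbundles: starting from such a splitting, putting a positive definite metric on $E_+$ and the negative of one on $E_-$ recovers a neutral metric. The problem therefore becomes one of the existence of such a splitting.

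Next I would extract the integral identities satisfied by the Euler classes $e_\pm\in H^2(N^4;\mathbb{Z})$ of $E_\pm$. The Whitney product formula gives $\chi(N^4)=e_+\cdot e_-$; the identity $p_1=e^2$ for oriented rank-$2$ bundles combined with the Hirzebruch signature theorem gives $e_+^2+e_-^2=3\tau(N^4)$; and $w_2(N^4)\equiv e_++e_-\pmod 2$. Adding and subtracting produces
\[
(e_++e_-)^2=3\tau+2\chi,\qquad (e_+-e_-)^2=3\tau-2\chi.
\]
Both $e_+\pm e_-$ are integral characteristic elements of the intersection form, so van der Blij's lemma gives $(e_+\pm e_-)^2\equiv\tau\pmod 8$, whence $3\tau\pm 2\chi\equiv\tau\pmod 8$, i.e.\ $\chi\pm\tau\equiv 0\pmod 4$.

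For the sufficiency clause under simple connectivity, $H^2(N^4;\mathbb{Z})$ is free and the intersection form is unimodular, so the two congruences can be reversed to produce integer classes $a,b\in H^2(N^4;\mathbb{Z})$ satisfying $a\cdot b=\chi$, $a^2+b^2=3\tau$, and $a+b\equiv w_2\pmod 2$. Standard classification of oriented rank-$2$ bundles on a $4$-complex by their Euler classes realises $a,b$ as $e(E_\pm)$ for bundles $E_\pm$, and, because $H^1(N^4;\mathbb{Z}/2)=0$ in the simply connected case, a primary obstruction computation upgrades this abstract data into a genuine splitting $TN^4=E_+\oplus E_-$. The delicate step is this last one: matching characteristic classes does not automatically match bundles, and it is here that simple connectivity is used to kill the secondary obstructions, which explains why the stated conditions are only asserted sufficient in that setting.
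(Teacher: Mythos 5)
The paper itself offers no proof of this theorem --- it is quoted from Matsushita and Kamada, whose arguments rest on Hirzebruch--Hopf's analysis of fields of plane elements --- so your proposal must be measured against that route, which it indeed follows. The necessity half, however, has a genuine gap at its very first step: $O(2,2)$ deformation retracts onto $O(2)\times O(2)$, \emph{not} onto $SO(2)\times SO(2)$, so a neutral metric only yields a splitting $TN=E_+\oplus E_-$ into possibly \emph{non-orientable} rank-2 bundles; the word ``oriented'' in your reduction is unjustified. This is not a technicality: take $T^2\times T^2$ with the flat neutral metric $g\oplus(-g)$ and quotient by the free, orientation-preserving involution $(z,w)\mapsto(\bar z,\bar w+\tfrac12)$, which preserves the metric and both distributions but reverses the orientation of each plane; the quotient is a compact oriented neutral 4-manifold whose distributions have $w_1(E_\pm)\neq 0$. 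For such metrics there are no integral Euler classes $e_\pm$, and the entire chain $\chi=e_+\cdot e_-$, $3\tau=e_+^2+e_-^2$, van der Blij, collapses. Nor is there a cheap repair: passing to the double cover killing $w_1(E_+)$ doubles both $\chi$ and $\tau$ (signature is multiplicative under finite covers), so you only recover $\chi\pm\tau\equiv 0\pmod 2$. Handling non-orientable plane fields is precisely the substantial part of Hirzebruch--Hopf's paper; as written, your argument proves necessity only when $H^1(N;\mathbb{Z}/2)=0$ forces orientability of the distributions, whereas the theorem asserts it for every compact (oriented) $N$.

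The sufficiency half is essentially the right outline but leans on two unproved steps. First, ``the two congruences can be reversed'' to produce characteristic classes $c,d$ with $c^2=3\tau+2\chi$ and $d^2=3\tau-2\chi$ is a lattice-theoretic claim that needs an argument: it holds because the congruences themselves exclude definite intersection forms (a positive definite form has $\chi-\tau=2$, a negative definite one $\chi+\tau=2$, and $S^4$ fails outright), after which one invokes the classification of indefinite unimodular forms to realize any square $\equiv\tau\pmod 8$ by a characteristic vector, and then sets $a=\tfrac12(c+d)$, $b=\tfrac12(c-d)$. Second, the passage from ``$E_+\oplus E_-$ and $TN$ have the same $w_2$, $e$, $p_1$'' to ``$TN\cong E_+\oplus E_-$'' is not a routine primary obstruction computation; it is the Dold--Whitney theorem that oriented rank-4 bundles over a 4-complex are classified by exactly these invariants. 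Both of these gaps are fixable by citing the appropriate results; the orientability gap in the necessity direction is not, and would require reworking the argument along Hirzebruch--Hopf's treatment of non-oriented plane fields.
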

\vspace{0.1in}

Thus, neither ${\mathbb S}^4$ nor $C{\mathbb P}^2$ admit a neutral metric, while K3 manifolds do. If one 
demands further that the neutral metric is K\"ahler with respect to some compatible complex structure, then the list of compact manifolds becomes 
smaller \cite{pet}. Thus, a K3 manifold admits a neutral metric, but not a neutral K\"ahler metric.

One motivation for this paper is to consider the extension of the above to 4-manifolds with boundary and to ask: what does the null boundary 
geometry see of the interior of a neutral 4-manifold? 

Similar to the holographic principle, but predating it by 60 years, the X-ray transform, or
strictly speaking its symmetric reduction to the Radon transform, is used every day in hospitals' CAT-scans and achieves this feat at the
level of functions \cite{DaW}. 

That is, given a real-valued function on ${\mathbb L}({\mathbb R}^3)$ (the difference between intensities along a ray, or oriented geodesic) 
reconstruct a function on ${\mathbb R}^3$ (the material density). The compatibility requirement is that the function on 
${\mathbb L}({\mathbb R}^3)$ satisfy the flat ultra-hyperbolic equation \cite{john}. 

Hilbert and Courant showed that the appropriate cauchy hypersurface for the ultra-hyperbolic equation is null - as otherwise there are consistency 
conditions on the cauchy data in the initial value formulation \cite{CaH}. Thus we encounter our first evidence, from analysis, that null boundaries 
are natural for neutral metrics. 

The nullity of the boundary introduces more structure than in the Riemannian case, in particular, a null foliation. Moreover, the neutral
metric has more structure again than a Lorentz metric with null boundary, namely, two distributions of totally null planes. It is to this
structure that we look for echoes of the interior geometry.

It also follows from Hirzebruch and Hopf that all open 4-manifolds admit a neutral metric, and so the question arises about the compactification
of such neutral 4-manifolds. Since the null cone is preserved by conformal transformations, it is natural then to look at the conformal compactification
of open neutral 4-manifolds. In Section \ref{s2} we investigated the simplest case, neutral flat 4-space. 

Note that in this paper, the conformal compactification has non-empty boundary, in contrast to earlier consideration of neutral conformal 
compactifications into a 4-manifold without boundary \cite{wood}.

Conformal compactifications of both Lorentz and Riemannian cases have been considered in some detail (e.g. \cite{and01} \cite{PaR}). 
In the Riemannian case it is natural to assume that the gradient of the conformal factor is nowhere zero on the boundary, while in the Lorentzian case it 
vanishes at points (at $i^0,i^\pm $ \cite{PaR}). In the neutral case we consider, it vanishes along a link in the boundary (property (iv) in 
Theorem \ref{t1}) 
and this is the manner in which the geometric topology intervenes.

For a flat 4-dimensional universe with two times,  the spacelike and timelike infinities are Hopf linked in the boundary. This
is the simplest situation and one would expect these linked infinities to also link to whatever topology the boundary has. 

What's more, the boundary is foliated by Lorentz tori about the link. This feature persists for other neutral conformal compactifications and is amenable 
to surgery along the link in a manner that preserves the null cone structure. 

Indeed, it should be possible to do surgery at infinity and preserve not just the conformal structure, but certain curvature conditions. What precisely 
the conditions are depends on the amount of flexibility required. We can impose a stiff restriction such as K\"ahler or a softer one such as 
anti-self dual, scalar flat.

Certainly all of the examples considered in this paper are conformally flat and 
scalar flat, and this is a natural class in which to do the surgery. In fact, by 2-handle attachments to the 4-ball one can generate the 
conformal compactifications of all of the oriented geodesic spaces ${\mathbb L}({\mathbb M}^3)$. We postpone the details of this aspect to a later paper. 

The fact that the $\alpha$-planes and $\beta$-planes are integrable in the boundary gives a sense in which the conformal compactification is 
asymptotically well-behaved. This can be traced back to the fact that ${\mathbb R}^{2,2}$ is 2-connected at infinity and therefore the neutral
metric has nothing to hang on to at infinity.  
This observation suggests the use of neutral metrics to detect topology at infinity via their conformal compactifications. 

This example represents the 0-handle in a neutral version of Kirby calculus \cite{gompf} \cite{kirby} and therefore
acts as a basis for handle-body constructions. The degenerate Lorentz structure on the boundary has preferred curves along which to attach 2-handles 
and the Lorentz tori give framings in the right circumstances.

In contrast, in the tangent hypersurfaces of Section \ref{s3} the $\alpha$-planes and $\beta$-planes were found to be contact. The boundary is not a 
3-sphere but a circle bundle with Euler number 2 over a 2-sphere. The fibres are null and the totally null planes rotate around the fibre as one 
traverses a fibre. 

The neutral 4-manifold bounded by the tangent hypersurface and its generalisation, the constant angle hypersurface, were introduced in \cite{gk2} 
to prove a global version of a classical result of Joachimsthal.

The study of Legendrian knots and their invariants is a well-established area in symplectic topology \cite{EaH01} \cite{EaH03}. Generally the knots
are in the 3-sphere, but many results extend to more general contact 3-manifolds. In Section \ref{s3}, we have a non-simply connected 3-manifold
with a pair of independent contact structures. 

A curve ${\mathcal C}$  on the null boundary ${\mathcal H}(S)$  corresponds to an oriented tangent line field along a curve $c$ in $S$. 
The classical Thurston-Bennequin index and the rotation index of the curve ${\mathcal C}$ can be expressed in terms of the twisting of the 
oriented line and the rotation of $c$ in $S$ through the neutral structure. 

The fact that the Reeb vector field is the set of normal lines to the geodesics on the surface is important. This means that {\it Reeb chords} - 
Reeb flow-lines that begin and end on a Legendrian knot,  minimize the induced two point distance function on the knot. Reeb chords play a 
critical role in knot contact homology \cite{EaEaNaS} as they represent crossings in the Legendrian projection. Further details of these neutral knot 
invariants will appear in a future paper.
 
Many peculiarities of 4-dimensional manifolds (as distinct from higher dimensions) arise because generic 2-discs are only immersed rather than embedded 
and one loses the ability to contract loops across such discs. Thus, attempts to exploit assumptions of simple connected-ness become more difficult and 
higher dimensional techniques fail.

The local model of these double points and their normal neighbourhood play key roles in our understanding of 4-manifolds (or lack thereof). In particular, 
the intersection of the boundaries of the normal neighbourhoods are tori, called {\it distinguished} in \cite{kirby} and {\it characteristic} 
in \cite{GaM}. It is this basic model we set out to find a neutral geometric interpretation for in Section \ref{s4}.

In the first instance, the intersecting discs should be flexible enough to be pushed around and stretched, for example in Casson's famous ``finger-move'' 
\cite{GaM}. In the geometric category, Lagrangian discs are certainly flexible enough for this task since they satisfy the h-principle \cite{EaM}. 

The boundary of a normal neighbourhood of these Lagrangian discs can be identified with the tangent hypersurface introduced in Section \ref{s3}.
The work of Casson in the 1970's involves repeated attempts to remove unwanted intersections by adding thickened discs that remove the double point.
The issue, peculiar to dimension 4, is that such discs may themselves have double points, leading to an iterative chain of operations seeking to push
the double point out to infinity.

While Casson achieved this at a homotopic level, giving rise to {\it flexible handles}, it certainly fails in the smooth category due to implications
of the work of Donaldson \cite{DaK}. A motivation for the present work is to explore this gap by geometrizing the boundary with a neutral metric 
and carrying it along in this iterative construction.

In Section \ref{s4} we found a geometric model for the intersection torus of a double point. Similar natural neutral constructions exist for the
other elements of the Casson handle, such as the Whitehead double, although in the tangent model a twisted version is more natural. Further details
of these constructions will be given in a future paper.

\vspace{0.2in}

\end{document}